\numberwithin{equation}{section}
\theoremstyle{definition}
\newtheorem{definition}{Definition}
\newenvironment{example}{\pushQED{\qed}\myexample}{\popQED\endmyexample}
\theoremstyle{plain}
\newtheorem{lemma}{Lemma}
\newtheorem{theorem}{Theorem}
\theoremstyle{remark}
\newtheorem{remark}{Remark}
\newcommand{\aast}{a^{\star}}
\newcommand{\assumptionRef}[1]{\textrm{Assumption (\ref{#1})}}
\newcommand{\assumRef}[1]{\textrm{Assum.\,}\br{\textrm{\ref{#1}}}}
\newcommand{\axiomRef}[1]{\textrm{Axiom (\ref{#1})}}
\newcommand{\axRef}[1]{\textrm{Ax.\,(\ref{#1})}}
\newcommand{\ball}[3]{K_{#1}\sbr{#2,#3}}
\newcommand{\BC}[2]{BC\br{#1,#2}}
\newcommand{\bff}{\mathbf{f}}
\newcommand{\bp}{\mathbf{p}}
\newcommand{\br}[1]{\left(#1\right)}
\newcommand{\brt}[1]{\br{#1}_{\T}}
\newcommand{\bast}{b^{\star}}
\newcommand{\bu}{\mathbf{u}}
\newcommand{\bvarphi}{\boldsymbol{\varphi}}
\newcommand{\bx}{\mathbf{x}}
\newcommand{\by}{\mathbf{y}}
\newcommand{\C}[2]{C\br{#1,#2}}
\newcommand{\cbr}[1]{\left\{#1\right\}}
\newcommand{\cl}[1]{\clOperator\br{#1}}
\newcommand{\conv}[1]{\convOperator\br{#1}}
\newcommand{\Crd}[2]{C_{rd}\br{#1,#2}}
\newcommand{\CrdOne}[2]{C_{rd}^1\br{#1,#2}}
\newcommand{\defeq}{\vcentcolon=}
\newcommand{\definitionRef}[1]{\textrm{Definition \ref{#1}}}
\newcommand{\defRef}[1]{\textrm{Def.\,\ref{#1}}}
\newcommand{\DeltaAint}[3]{\br{A}\!\Deltaint{#1}{#2}{#3}}
\newcommand{\Deltaint}[3]{\int_{#1}^{#2}{#3}}
\newcommand{\Deltas}{\mathrm{\Delta}s}
\newcommand{\Deltatau}{\mathrm{\Delta}\tau}
\newcommand{\E}{E}
\newcommand{\Est}{\E^{\star}}
\newcommand{\eps}{\varepsilon}
\newcommand{\eqdef}{=\vcentcolon}
\newcommand{\eqText}[1]{\stackrel{#1}{=}}
\newcommand{\exampleRef}[1]{\textrm{Example \ref{#1}}}
\newcommand{\fF}{\mathcal{F}}
\newcommand{\fM}{\mathcal{M}}
\newcommand{\fN}{\mathcal{N}}
\newcommand{\geqText}[1]{\stackrel{#1}{\geq}}
\newcommand{\lbrsbr}[1]{\left(#1\right]}
\newcommand{\lbrsbrt}[1]{\lbrsbr{#1}_{\T}}
\newcommand{\lemmaRef}[1]{\textrm{Lemma \ref{#1}}}
\newcommand{\leqText}[1]{\stackrel{#1}{\leq}}
\newcommand{\lmRef}[1]{\textrm{Lm.\,\ref{#1}}}
\newcommand{\lsrbr}[1]{\left[#1\right)}
\newcommand{\lsrbrt}[1]{\lsrbr{#1}_{\T}}
\newcommand{\lText}[1]{\stackrel{#1}{<}}
\newcommand{\N}{\mathbb{N}}
\newcommand{\nbrt}[2]{\mathcal{O}_{\T}^{#1}\br{#2}}
\newcommand{\openball}[3]{K_{#1}\br{#2,#3}}
\newcommand{\partRef}[1]{\textrm{Part \ref{#1}}}
\newcommand{\pDelta}[2]{\frac{\partial{#1}}{\Delta{#2}}}
\newcommand{\R}{\mathbb{R}}
\newcommand{\remarkRef}[1]{\textrm{Remark \,\ref{#1}}}
\newcommand{\sbr}[1]{\left[#1\right]}
\newcommand{\sbrt}[1]{\sbr{#1}_{\T}}
\newcommand{\sbrtk}[1]{\sbrt{#1}^{\kappa}}
\newcommand{\sectionRef}[1]{\textrm{Section \ref{#1}}}
\newcommand{\st}{\!:}
\newcommand{\subseteqText}[1]{\stackrel{#1}{\subseteq}}
\newcommand{\T}{\mathbb{T}}
\newcommand{\theoremRef}[1]{\textrm{Theorem \ref{#1}}}
\newcommand{\Tk}{\T^{\kappa}}
\newcommand{\thmRef}[1]{\textrm{Thm.\,\ref{#1}}}
\newcommand{\Z}{\mathbb{Z}}
\DeclareMathOperator{\clOperator}{cl}
\DeclareMathOperator{\convOperator}{conv}
\begin{document}
	\title[On the existence of solutions of dynamic equations on time scales in Banach spaces]{On the existence of solutions of dynamic equations on time scales in Banach spaces}
	\author[D. Oberta]{Dušan Oberta}
	\address{Institute of Mathematics, Faculty of Mechanical Engineering, Brno University of Technology, Technická 2, 616 69 Brno, Czech Republic}
	\email{Dusan.Oberta@vutbr.cz, oberta.du@gmail.com}
	\begin{abstract}
		In this paper we address the question of solvability of dynamic equations on time scales in Banach spaces. In particular, our main theorem extends the result for classical differential equations in Banach spaces of Banaś and Goebel (1980) established in \cite{Banas_Goebel}, to an arbitrary time scale. Central role is played by the axiomatic theory of measures of noncompactness and the newly introduced Kamke $\Delta$-function. Also, we study countable systems of dynamic equations on time scales arising from semi-discretisation of parabolic partial dynamic equations.
	\end{abstract}
	\subjclass[2020]{34G20, 34N05, 34A12, 47H08, 35K91, 65M06}
	\keywords{dynamic equations on time scales in Banach spaces, measures of noncompactness, Kamke $\Delta$-function, semi-discretisation of parabolic partial dynamic equations on time scales}
	\maketitle

\section{Introduction}
Calculus on time scales, as introduced in \cite{Hilger}, provides not only a unification between the theories of classical differential and difference equations, but also their extension to more general settings. Moreover, as the original theory was developed in the context of Banach spaces, it is natural to study dynamic equations on time scales in Banach spaces directly.

Let $\E$ be a real Banach space and $\T$ a time scale. In this paper we investigate the following initial value problem
\begin{subequations}\label{eq_01_01}
	\begin{align}
		u^{\Delta}\br{t}&=f\br{t,u\br{t}},\quad{t}\in\sbrtk{a,b},\\
		u\br{a}&=u_0,
	\end{align}
\end{subequations}
where $u_0\in\E$, $f\st\sbrt{a,b}\times\E\to\E$, and $u^{\Delta}\br{\cdot}$ denotes the $\Delta$-derivative of $u$.

The basic question of existence and uniqueness of local solutions of \eqref{eq_01_01} is well-established, as the Picard-like theorem (i.e. the existence and uniqueness theorem with $f$ being \enquote{some sort of rd-continuous}, and Lipschitz continuous in the second variable) holds for an arbitrary time scale and an arbitrary Banach space (see \cite{Bohner_Dynamic}).

On the other hand, the basic question of existence of local solutions of \eqref{eq_01_01} is much more problematic. It is well-known that the Peano-like theorem (i.e. the existence theorem with $f$ being \enquote{some sort of rd-continuous}) does not necessarily hold. In particular, it is a well-known result that it fails for classical differential equations in infinite-dimensional Banach spaces, whereas it holds for difference equations in an arbitrary Banach space. Furthermore, as pointed out in \cite{Cichon}, such a Peano-like theorem holds for an arbitrary time scale and an arbitrary Banach space, provided that the initial point is right-scattered. Thus, a natural question arises, namely to establish sufficient conditions for the existence of local solutions of \eqref{eq_01_01} on a general time scale and in a general Banach space.

Some existence results for classical differential equations in Banach spaces can be found e.g. in \cite{Ambrosetti, Banas_Goebel, Li, Monch_vonHarten, Oberta}. Other special cases of dynamic equations, namely difference and q-difference equations, are studied e.g. in \cite{Agarwal_Thompson_Tisdell} and \cite{Silindir_Soyoglu}, respectively. Continuous dependence of solutions of dynamic equations on their domain, with examples of dynamic equations on the Cantor set, is studied in \cite{Cichon_Yantir}. For the case of general dynamic equations on time scales in Banach spaces, various existence results for the IVP \eqref{eq_01_01} can be found in \cite{Ahmed_Hazarika, Cichon_Kubiaczyk_SikorskaNowak_Yantir_2012, Cichon_Kubiaczyk_SikorskaNowak_Yantir_2009, Kubiaczyk_SikorskaNowak, SikorskaNowak, Tikare_Bohner_Azarika_Agarwal}. Note that, apart from the existence of classical solutions, also the existence of Carathéodory solutions, weak solutions on infinite time scales, and pseudosolutions for fractional dynamic equations are studied in these papers. Further results in this area include \cite{Yantir_Kubiaczyk_SikorskaNowak_2015, Yantir_Kubiaczyk_SikorskaNowak_2013}, where the question of existence of both classical and Carathéodory solutions of Sturm-Liouville dynamic equations on time scales in Banach spaces is addressed.

In our main result, \theoremRef{thm_08}, the requirements imposed on $f$ are of a different type (as far as we are concerned, neither less, nor more general) compared to the existing literature. Also, the comparison condition in our main result, which utilises the notion of a Kamke $\Delta$-function, is new when compared to the existing literature. Moreover, \theoremRef{thm_08} generalises a similar result from \cite{Banas_Goebel}, by which it was inspired, in the sense that for $\T=\R$, the existence result from \cite{Banas_Goebel} is a special case of our main theorem. Finally, our main result is developed directly for measures of noncompactness defined axiomatically, and thus is not restricted to the use of a specific, e.g. Kuratowski, measure of noncompactness, as is often the case in the existing literature. However, note that the use of the axiomatic theory of measures of (weak) noncompactness in connection with the theory of dynamic equations on time scales has been also mentioned e.g. in \cite{Cichon_Kubiaczyk_SikorskaNowak_Yantir_2012, Cichon_Kubiaczyk_SikorskaNowak_Yantir_2009, Yantir_Kubiaczyk_SikorskaNowak_2015}.

\sectionRef{sec_02} provides an introduction to the theory of calculus on time scales, the theory of measures of noncompactness, and it also introduces some new concepts. Firstly, the notion of a Kamke $\Delta$-function introduced here is completely new in the literature. It plays the role of a comparison function, and it is an extension of the Kamke function defined in \cite{Banas_Goebel, Li} (where such a notion is defined for the case of $\T=\R$) to the case of an arbitrary time scale. Secondly, a new concept of a uniform rd-equicontinuity for a family of functions is defined, which generalises the concept of the uniform equicontinuity. It is a new notion in the theory of time scales, and its definition is based on an equivalent characteristics for rd-continuous functions (see \lemmaRef{lm_01}), which is also a new result. Furthermore, it is also discussed in this section that a continuous solution of the corresponding Volterra-type integral equation is not necessarily a local solution of the IVP \eqref{eq_01_01} (see \exampleRef{ex_03} for a counterexample). As far as we are aware, such a discussion is unique in the existing literature. Finally, note that some of the auxiliary results, which are proved in this section, are new in the context of time scales. For more details, see \sectionRef{sec_05}.

In \sectionRef{sec_03} we prove our main existence theorem. The proof follows the main ideas of the proof of Theorem $13.3.1$ in \cite{Banas_Goebel}, which is a similar result for classical differential equations in Banach spaces. Also, note that a similar result was proved in \cite{Oberta} in the context of fractional differential equations in Banach spaces. Even though the general outline of the proof could be reused, the extension of those results to the context of dynamic equations on time scales requires establishment of various results from the theory of calculus on time scales, most of which are proved in \sectionRef{sec_02}. Also, numerous modifications are in place due to the use of rd-continuity instead of \enquote{classical} continuity, as well as due to other aspects connected to time scales.

In \sectionRef{sec_04} we provide an application of our main result in the Banach space $c_0$. We study a countable system of dynamic equations on a general time scale, which arises from semi-discretisation of the following parabolic partial dynamic equation
\begin{equation}\label{eq_01_02}
	\pDelta{u}{t}u\br{t,x}=\diffp[2]{u}{x}\br{t,x}+F\br{t,x}.
\end{equation}
In particular, we provide sufficient conditions for the existence of a local solution of the dynamic equation, corresponding to such a countable system, in the space $c_0$. Note that diffusion-type partial dynamic equations on a discrete space domain, which are similar to \eqref{eq_01_02}, are studied also in \cite{Slavik_Stehlik_2015, Slavik_Stehlik_2014, Slavik_Stehlik_Volek}.

\section{Auxiliary Results}\label{sec_02}
Let $\E=\br{\E,\norm{\cdot}_{\E}}$ denote a real Banach space. The neutral element of $\E$ will be denoted by $0_{\E}$. The closed (open) ball centred at $x_0\in\E$ with radius $r>0$ will be denoted by $\ball{\E}{x_0}{r}$ ($\openball{\E}{x_0}{r}$). For $X\subseteq{\E}$, let $\cl{X}$ denote the closure of $X$. Similarly, let $\conv{X}$ denote the convex hull of $X$. Recall that $\cl{\conv{X}}$, the closed convex hull of $X$, is the smallest closed and convex set containing $X$.

\subsection{Introduction to calculus on time scales}
In this subsection we introduce the notion of time scales, $\Delta$-derivative and Cauchy $\Delta$-integral, together with their basic properties. For more details see \cite{Hilger}, where the theory is developed for abstract-valued functions directly, or \cite{Bohner_Advances, Bohner_Dynamic}, where the theory is developed mostly for real-valued functions.

\textit{Time scale}, denoted by $\T$, is an arbitrary non-empty closed subset of real numbers. The \textit{forward jump operator} $\sigma\st\T\to\T$ is defined as $\sigma\br{t}\defeq\inf\cbr{s\in\T\st{s}>t}$, where we define $\inf\emptyset\defeq\sup\T$ (i.e. if a time scale has the maximum $M$, then $\sigma\br{M}=M$). Similarly, the \textit{backward jump operator} $\rho\st\T\to\T$ is defined as $\rho\br{t}\defeq\sup\cbr{s\in\T\st{s}<t}$, where we define $\sup\emptyset\defeq\inf\T$. We say that $t\in\T$ is \textit{right-dense} (\textit{left-dense}), if $\sigma\br{t}=t$ ($\rho\br{t}=t$). We say that $t\in\T$ is \textit{right-scattered} (\textit{left-scattered}), if $\sigma\br{t}>t$ ($\rho\br{t}<t$). If $\T$ has a left-scattered maximum $M$, we define $\Tk\defeq\T\setminus\cbr{M}$, otherwise we define $\Tk\defeq\T$.

We define $\sbrt{a,b}\defeq\sbr{a,b}\cap\T$. Moreover, when using the notation $\sbrt{a,b}$, we always assume that $a,b\in\T$. Similarly, we define $\brt{a,b}, \lsrbrt{a,b}$ and $\lbrsbrt{a,b}$ (whilst we always assume that $a,b\in\T$). To denote the intersection of a real interval and a time scale (with the endpoints not necessarily being part of $\T$), we write $\sbr{a,b}\cap\T$ (similarly for open and half-closed real intervals). Finally, for $t\in\T$ and $\delta>0$, we define $\nbrt{\delta}{t}\defeq\br{t-\delta,t+\delta}\cap\T$.

Further on, we assume that $\T$ is equipped with the standard topology inherited from $\R$. In particular, note that $u\st\T\to\E$ is trivially right-continuous (left-continuous) at every right-scattered (left-scattered) point.

\begin{definition}
	We say that $u\st\T\to\E$ is \textit{$\Delta$-differentiable} at $t\in\T$, if there exists $u^{\Delta}\br{t}\in\E$ such that
	\begin{equation*}
		\begin{aligned}
			&\br{\forall\eps>0}\br{\exists\delta>0}\br{\forall{s}\in\nbrt{\delta}{t}}\st\\
			&\qquad\br{\norm{u\br{\sigma\br{t}}-u\br{s}-\br{\sigma\br{t}-s}\cdot{u}^{\Delta}\br{t}}_{\E}<\eps\cdot\abs{\sigma\br{t}-s}}.
		\end{aligned}
	\end{equation*}
	In such a case, we call $u^{\Delta}\br{t}$ a \textit{$\Delta$-derivative} of $u$ at $t$.
\end{definition}

Note that if $u$ is $\Delta$-differentiable at $t\in\Tk$, then $u$ is continuous at $t$ and the value $u^{\Delta}\br{t}$ is determined uniquely. On the other hand, if $t\in\T\setminus\Tk$ (i.e. if $t$ is the left-scattered maximum of $\T$, provided that such a maximum exists), then it is rather easy to show that $u^{\Delta}\br{t}$ is not determined uniquely. Moreover, in such a case, every $x\in\E$ is actually a $\Delta$-derivative of $u\st\T\to\E$ at $t\in\T\setminus\Tk$. Finally, note that for $t\in\T\setminus\Tk$, both $u$ and $u^{\Delta}$ are trivially continuous at $t$ (since such a $t$ must be the left-scattered maximum of $\T$), regardless of the choice of $u^{\Delta}\br{t}$.

If $u$ is continuous at a right-scattered point $t\in\Tk$, then $u$ is $\Delta$-differentiable at $t$ and it holds that
\begin{equation}\label{eq_02_01}
	u^{\Delta}\br{t}=\frac{u\br{\sigma\br{t}}-u\br{t}}{\sigma\br{t}-t}.
\end{equation}
If $t\in\Tk$ is right-dense, then $u$ is $\Delta$-differentiable at $t$ if and only if the following limit exists (and is finite)
\begin{equation}\label{eq_02_02}
	\lim_{s\to{t},s\in\T}\frac{u\br{s}-u\br{t}}{s-t}.
\end{equation}
In such a case, $u^{\Delta}\br{t}$ is equal to that limit.

\begin{definition}\label{def_02}
	We say that $u\st\T\to\E$ is \textit{rd-continuous}, if $u$ is continuous at all the right-dense points and the left-sided limits exist (and are finite) at all the left-dense points.
\end{definition}

\begin{remark}\label{rem_01}
	Suppose that $\T$ has the maximum $M$. By the definition of $\sigma\br{\cdot}$, $\sigma\br{M}=M$ (i.e. $M$ is right-dense). Moreover, throughout this paper, whenever we work on some time scale interval $\sbrt{a,b}$, we always assume that $\sigma\br{b}=b$ (i.e. as if the whole time scale was equal to $\sbrt{a,b}$). In particular, \definitionRef{def_02} implies that every rd-continuous function defined on $\sbrt{a,b}$ must be continuous at $b$ (since by our previous convention, $b$ is right-dense).
\end{remark}

Note that every rd-continuous function defined on $\sbrt{a,b}$ is bounded.

We denote by $\C{\sbrt{a,b}}{\E}$ the space of all the continuous functions $u\st\sbrt{a,b}\to\E$. Note that the space $\C{\sbrt{a,b}}{\E}$ is a Banach space when equipped with the supremum norm $\norm{u}_{\infty}\defeq\sup_{t\in\sbrt{a,b}}\norm{u\br{t}}_{\E}$. Convergence in the space $\C{\sbrt{a,b}}{\E}$ (i.e. the uniform convergence) will be denoted by \enquote{$\rightrightarrows$}. Similarly, we denote by $\Crd{\sbrt{a,b}}{\E}$ the space of all the rd-continuous functions $u\st\sbrt{a,b}\to\E$. Moreover, we denote by $\CrdOne{\sbrt{a,b}}{\E}$ the space of all the functions $u\st\sbrt{a,b}\to\E$ such that $u\in\C{\sbrt{a,b}}{\E}$, $u^{\Delta}\br{\cdot}$ exists on $\sbrt{a,b}$ and $u^{\Delta}\in\Crd{\sbrt{a,b}}{\E}$. Finally, note that in the case of $u\st\sbrt{a,b}\to\E$ and $b$ being left-scattered, the (existence and) rd-continuity of $u^{\Delta}\br{\cdot}$ on $\sbrt{a,b}$ does not depend on $u^{\Delta}\br{b}$ (which in such a case always exists, but is not determined uniquely), since $u^{\Delta}\br{\cdot}$ is trivially continuous at the left-scattered maximum $b$, regardless of the value $u^{\Delta}\br{b}$.

Let $X\subseteq\cbr{u\st\sbrt{a,b}\to\E}$. Recall that the family of functions $X$ is said to be equibounded, if
\begin{equation*}
	\br{\exists{M}>0}\br{\forall{u}\in{X}}\br{\forall{t}\in\sbrt{a,b}}\st\br{\norm{u\br{t}}_{\E}\leq{M}}.
\end{equation*}
Moreover, recall that the family of functions $X$ is said to be uniformly equicontinuous, if
\begin{equation*}
	\br{\forall\eps>0}\br{\exists\delta>0}\br{\forall{u\in{X}}}\br{\forall{s,t\in\sbrt{a,b}}}\st\br{\abs{s-t}<\delta\implies\norm{u\br{s}-u\br{t}}_{\E}<\eps}.
\end{equation*}

\begin{definition}\label{def_03}
	Let $U\st\T\to\E$ be $\Delta$-differentiable on $\Tk$, and $u\st\Tk\to\E$ be such that $U^{\Delta}\br{\cdot}=u\br{\cdot}$ on $\Tk$. Then $U$ is called an \textit{antiderivative} of $u$. For $s,t\in\T$, we define the \textit{Cauchy $\Delta$-integral} of $u$ from $s$ to $t$ as
	\begin{equation*}
		\DeltaAint{s}{t}{u\br{\tau}\Deltatau}\defeq{U}\br{t}-U\br{s}.
	\end{equation*}
\end{definition}

In the case of $\E=\R$, we denote the Cauchy $\Delta$-integral simply by $\Deltaint{a}{b}{u\br{s}\Deltas}$. For a general Banach space $\E$, we use the notation $\DeltaAint{a}{b}{u\br{s}\Deltas}$ (with the prefix $\br{A}$, which stands for \enquote{abstract integral}).

\begin{remark}
	Notice that it is sufficient for $u$ from \definitionRef{def_03} to be defined only on $\Tk$. Yet, the Cauchy $\Delta$-integral of $u$ is well-defined on the whole $\T$. However, for convenience we often assume that $u$ is defined on $\T$.
\end{remark}

\begin{remark}\label{rem_03}
	Note that every rd-continuous function $u\st\sbrt{a,b}\to\E$ has an antiderivative. In particular, for a fixed $t_0\in\sbrt{a,b}$, an antiderivative of $u$ is of the form
	\begin{equation*}
		U\br{t}\defeq\DeltaAint{t_0}{t}{u\br{\tau}\Deltatau},\quad{t}\in\sbrt{a,b}.
	\end{equation*}
\end{remark}

Due to the fact that every $\Delta$-differentiable function is continuous, it immediately follows from \definitionRef{def_03} and \remarkRef{rem_03} that $\DeltaAint{a}{\cdot}{u\br{s}\Deltas}$ is well-defined and continuous on $\sbrt{a,b}$, for every $u\in\Crd{\sbrt{a,b}}{\E}$.

Moreover, for every $u\in\Crd{\T}{\E}$ and $v\in\Crd{\T}{\R}$, and for every $\sbrt{a,b}$, it holds that
\begin{equation}\label{eq_02_03}
	\br{\norm{u\br{t}}_{\E}\leq{v}\br{t},\forall{t}\in\lsrbrt{a,b}}\implies\norm{\DeltaAint{a}{b}{u\br{s}\Deltas}}_{\E}\leq\Deltaint{a}{b}{v\br{s}\Deltas}.
\end{equation}
Finally, note that for every $u\in\Crd{\sbrt{a,b}}{\E}$, it holds that
\begin{equation}\label{eq_02_04}
	\DeltaAint{a}{\sigma\br{a}}{u\br{s}\Deltas}=\br{\sigma\br{a}-a}\cdot{u}\br{a}.
\end{equation}

\subsection{Additional results from the theory of calculus on time scales}
In this subsection we prove some further auxiliary results, which will be needed later in this paper. Also, we provide an equivalent characteristics for rd-continuity, which will play an important role in the definition of the uniform rd-equicontinuity, which on the other hand will be utilised in our main theorem.

\begin{theorem}[\normalfont{Mean value theorem}]\label{thm_01}
	Let $u\in\Crd{\sbrt{a,b}}{\E}$. Then for all $t_0\in\sbrt{a,b}$ it holds that
	\begin{equation}\label{eq_02_05}
		\DeltaAint{a}{t_0}{u\br{s}\Deltas}\in\br{t_0-a}\cdot\cl{\conv{\cbr{u\br{s}\st{s}\in\sbrt{a,t_0}}}}.
	\end{equation}
\end{theorem}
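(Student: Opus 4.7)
The plan is to use a Hahn--Banach separation argument, reducing the abstract statement to a monotonicity property of the real-valued $\Delta$-integral. First, dispose of the trivial case $t_0 = a$: both sides reduce to $\cbr{0_{\E}}$. Assume from now on that $t_0 > a$.

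Let $K \defeq \cl{\conv{\cbr{u\br{s}\st{s}\in\sbrt{a,t_0}}}}$ and set
\begin{equation*}
    I \defeq \frac{1}{t_0-a}\DeltaAint{a}{t_0}{u\br{s}\Deltatau}.
\end{equation*}
Suppose, aiming for a contradiction, that $I \notin K$. Since $K$ is closed and convex and $\cbr{I}$ is compact and disjoint from $K$, a geometric form of the Hahn--Banach theorem yields $\varphi\in\E^{*}$ and $\alpha\in\R$ with $\varphi\br{y}\leq\alpha$ for every $y\in K$ and $\varphi\br{I}>\alpha$. In particular, $\varphi\br{u\br{s}}\leq\alpha$ for every $s\in\sbrt{a,t_0}$.

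The key step is to show that $\varphi$ commutes with the Cauchy $\Delta$-integral, i.e.\ that
\begin{equation*}
    \varphi\br{\DeltaAint{a}{t_0}{u\br{s}\Deltas}} = \Deltaint{a}{t_0}{\varphi\br{u\br{s}}\Deltas}.
\end{equation*}
For this, let $U$ be an antiderivative of $u$ on $\Tk$ (which exists by \remarkRef{rem_03}). Plugging the linear bound $\abs{\varphi\br{x}}\leq\norm{\varphi}\cdot\norm{x}_{\E}$ into the definition of $\Delta$-differentiability and using linearity of $\varphi$, one obtains $\br{\varphi\circ U}^{\Delta}\br{t}=\varphi\br{U^{\Delta}\br{t}}=\varphi\br{u\br{t}}$ at every $t\in\Tk$. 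Hence $\varphi\circ U$ is an antiderivative of the rd-continuous real-valued function $\varphi\circ u$, and the identity above follows from \definitionRef{def_03}.

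Combining this identity with monotonicity of the real-valued $\Delta$-integral (a standard fact from \cite{Bohner_Dynamic}, applied to the non-negative rd-continuous function $\alpha - \varphi\circ u$ on $\lsrbrt{a,t_0}$), we obtain
\begin{equation*}
    \varphi\br{\DeltaAint{a}{t_0}{u\br{s}\Deltas}} = \Deltaint{a}{t_0}{\varphi\br{u\br{s}}\Deltas} \leq \alpha\cdot\br{t_0-a},
\end{equation*}
so $\varphi\br{I}\leq\alpha$, contradicting the separation. Therefore $I\in K$, which is exactly \eqref{eq_02_05}.

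The main obstacle is the commutation lemma for $\varphi$ and the $\Delta$-integral; once this is in hand, the rest is routine convex analysis plus real-valued time scale monotonicity. Everything else (existence of antiderivatives, the fact that $\varphi\circ u$ is rd-continuous, $\Delta$-differentiability of $\varphi\circ U$) is either immediate from continuity and linearity of $\varphi$ or is already quoted in \sectionRef{sec_02}.
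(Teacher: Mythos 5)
Your proof is correct, and it shares the paper's overall architecture — reduce the abstract inclusion to a scalar statement by testing against a bounded linear functional obtained from Hahn--Banach — but the two arguments diverge in how they use that functional. The paper writes $K$ as the intersection of all closed half-spaces $\cbr{x\st h_0\br{x}\leq\lambda_0}$ containing it, sets $\varphi\defeq h_0\circ g$ with $g\br{\cdot}=\DeltaAint{a}{\cdot}{u\br{s}\Deltas}$, and closes the argument with the \emph{mean value theorem} for real-valued functions on time scales (Theorem $1.14$ in \cite{Bohner_Advances}), which produces a single point $\xi\in\lsrbrt{a,t_0}$ with $\varphi\br{t_0}-\varphi\br{a}\leq\br{t_0-a}\cdot\varphi^{\Delta}\br{\xi}=\br{t_0-a}\cdot h_0\br{u\br{\xi}}\leq\br{t_0-a}\cdot\lambda_0$. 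You instead argue by contradiction via strict separation of the point $I$ from $K$, prove the commutation identity $\varphi\br{\DeltaAint{a}{t_0}{u\br{s}\Deltas}}=\Deltaint{a}{t_0}{\varphi\br{u\br{s}}\Deltas}$, and then integrate the pointwise bound $\varphi\br{u\br{s}}\leq\alpha$ using monotonicity of the real-valued $\Delta$-integral. The key computation is the same in both cases (that a bounded linear functional passes through the $\Delta$-derivative of the antiderivative), but your closing step trades the time-scale mean value theorem for integral monotonicity, which is arguably the more elementary ingredient; the paper's version avoids the explicit commutation lemma and the case split at $t_0=a$. Both are complete proofs.
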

\begin{proof}
	Firstly, note that according to \remarkRef{rem_03}, the integral in \eqref{eq_02_05} is well-defined.

	Let $t_0\in\sbrt{a,b}$ be arbitrary. Define $K$ as
	\begin{equation*}
		K\defeq\cl{\conv{\cbr{u\br{s}\st{s}\in\sbrt{a,t_0}}}}.
	\end{equation*}
	It can be shown (see also the proof of Lemma $1$ in \cite{Oberta}), using one of the consequences of the Hahn-Banach theorem, that
	\begin{equation}\label{eq_02_06}
		K=\bigcap_{\lambda\in\R,h\in\Est\,\st\cbr{x\in\E\,\st{h}\br{x}\leq\lambda}\supseteq{K}}\cbr{x\in\E\st{h}\br{x}\leq\lambda},
	\end{equation}
	where $\Est$ denotes the dual space of $\E$ (i.e. the space of all the bounded linear functionals on $\E$).
	Also, note that the intersection on the RHS of \eqref{eq_02_06} is over a non-empty set, as for $\lambda=0$ and $h=0_{\Est}$, it holds that $\cbr{x\in\E\st{h}\br{x}\leq\lambda}=\E\supseteq{K}$.
	
	Let $\lambda_0\in\R$ and $h_0\in\Est$ be arbitrary such that $K\subseteq\cbr{x\in{\E}\st{h}_0\br{x}\leq\lambda_0}$. According to \eqref{eq_02_06}, in order to prove \eqref{eq_02_05}, it suffices to show that
	\begin{equation*}
		h_0\br{\DeltaAint{a}{t_0}{u\br{s}\Deltas}}\leq\br{t_0-a}\cdot\lambda_0.
	\end{equation*}
	
	Define $g\st\sbrt{a,b}\to\E$ as $g\br{\cdot}\defeq\DeltaAint{a}{\cdot}{u\br{s}\Deltas}$. From \definitionRef{def_03}, it immediately follows that $g\br{\cdot}$ is $\Delta$-differentiable on $\sbrtk{a,b}$ and that $g^{\Delta}\br{\cdot}=u\br{\cdot}$ on $\sbrtk{a,b}$. Next, define $\varphi\st\sbrt{a,b}\to\R$ as $\varphi\br{\cdot}\defeq{h}_0\br{g\br{\cdot}}$. Using the linearity and continuity of $h_0\br{\cdot}$, it is rather easy to show that $\varphi\br{\cdot}$ is $\Delta$-differentiable on $\sbrtk{a,b}$ and that $\varphi^{\Delta}\br{\cdot}=h_0\br{g^{\Delta}\br{\cdot}}$ on $\sbrtk{a,b}$. Thus, it holds that
	\begin{equation}\label{eq_02_07}
		\begin{aligned}
			&h_0\br{\DeltaAint{a}{t_0}{u\br{s}\Deltas}}=h_0\br{g\br{t_0}}\eqText{g\br{a}=0}h_0\br{g\br{t_0}-g\br{a}}\\
			&\qquad\eqText{h_0\br{\cdot}\text{ is linear}}h_0\br{g\br{t_0}}-h_0\br{g\br{a}}=\varphi\br{t_0}-\varphi\br{a}.
		\end{aligned}
	\end{equation}
	By the mean value theorem for real-valued functions on time scales (see Theorem $1.14$ in \cite{Bohner_Advances}), there exists $\xi\in\lsrbrt{a,t_0}$ such that $\varphi\br{t_0}-\varphi\br{a}\leq\br{t_0-a}\cdot\varphi^{\Delta}\br{\xi}$. Thus, for such a $\xi$, \eqref{eq_02_07} yields that
	\begin{equation*}
		\begin{aligned}
			&h_0\br{\DeltaAint{a}{t_0}{u\br{s}\Deltas}}\leq\br{t_0-a}\cdot\varphi^{\Delta}\br{\xi}=\br{t_0-a}\cdot{h}_0\br{g^{\Delta}\br{\xi}}=\br{t_0-a}\cdot{h}_0\br{u\br{\xi}}\\
			&\qquad\leqText{u\br{\xi}\in{K}\subseteq\cbr{x\in\E\,\st{h}_0\br{x}\leq\lambda_0}}\br{t_0-a}\cdot\lambda_0.
		\end{aligned}
	\end{equation*}
\end{proof}

\begin{theorem}[\normalfont{Arzelà-Ascoli theorem, \cite{Munkres}, Theorem $47.1$}]\label{thm_02}
	Let $X\subseteq\C{\sbrt{a,b}}{\E}$. Then the set $X$ is relatively compact in $\C{\sbrt{a,b}}{\E}$ if and only if the family of functions $X$ is uniformly equicontinuous on $\sbrt{a,b}$ and the set $X\br{t}$ is relatively compact in $\E$, for every $t\in\sbrt{a,b}$.
\end{theorem}

\begin{definition}\label{def_04}
	We say that a family of functions $X\subseteq\cbr{u\st\sbrt{a,b}\to\E}$ is \textit{uniformly rd-equicontinuous} on $\sbrt{a,b}$, if
	\begin{equation}\label{eq_02_08}
		\begin{aligned}
			&\br{\forall\eps>0}\br{\exists\delta_1,\dots,\delta_n>0}\\
			&\qquad\br{\exists\tau_1,\dots,\tau_n\in\sbrt{a,b}\st{a}=\tau_1<\dots<\tau_n=b\text{ and }\sbrt{a,b}\subseteq\bigcup_{i=1}^n\nbrt{\delta_i}{\tau_i}}\\
			&\qquad\br{\forall{u}\in{X}}\br{\forall{j}=1,\dots,n}\br{\forall{s},t\in\nbrt{\delta_j}{\tau_j}}\st\\
			&\qquad\br{\br{\tau_j\text{ is right-dense or }s,t<\tau_j\text{ or }s,t\geq\tau_j}\implies\norm{u\br{s}-u\br{t}}_{\E}<\eps}.
		\end{aligned}
	\end{equation}
\end{definition}

\begin{remark}\label{rem_04}
	In particular, as we will show in the next lemma, \definitionRef{def_04} implies that for every uniformly rd-equicontinuous family $X$ on $\sbrt{a,b}$, it holds that $X\subseteq\Crd{\sbrt{a,b}}{\E}$.
\end{remark}

\begin{lemma}\label{lm_01}
	Let $u\st\sbrt{a,b}\to\E$. Then $u$ is rd-continuous on $\sbrt{a,b}$ if and only if the following condition holds
	\begin{equation}\label{eq_02_09}
		\begin{aligned}
			&\br{\forall\eps>0}\br{\exists\delta_1,\dots,\delta_n>0}\\
			&\qquad\br{\exists\tau_1,\dots,\tau_n\in\sbrt{a,b}\st{a}=\tau_1<\dots<\tau_n=b\text{ and }\sbrt{a,b}\subseteq\bigcup_{i=1}^n\nbrt{\delta_i}{\tau_i}}\\
			&\qquad\br{\forall{j}=1,\dots,n}\br{\forall{s},t\in\nbrt{\delta_j}{\tau_j}}\st\\
			&\qquad\br{\br{\tau_j\text{ is right-dense or }s,t<\tau_j\text{ or }s,t\geq\tau_j}\implies\norm{u\br{s}-u\br{t}}_{\E}<\eps}.
		\end{aligned}
	\end{equation}
\end{lemma}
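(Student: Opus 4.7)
The proof splits into two implications, with $(\Rightarrow)$ handled by a compactness argument on $\sbrt{a,b}$ and $(\Leftarrow)$ by a direct verification of the two clauses of \definitionRef{def_02}.

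For $(\Rightarrow)$, assume $u\in\Crd{\sbrt{a,b}}{\E}$ and fix $\eps>0$. For each $\tau\in\sbrt{a,b}$ I would construct a radius $\delta_\tau>0$ such that the local bound $\norm{u\br{s}-u\br{t}}_{\E}<\eps$ holds for every pair $s,t\in\nbrt{\delta_\tau}{\tau}$ lying on the same admissible side of $\tau$. At a right-dense $\tau$, continuity of $u$ at $\tau$ supplies $\delta_\tau>0$ with $\norm{u\br{s}-u\br{\tau}}_{\E}<\eps/2$ on $\nbrt{\delta_\tau}{\tau}$, whence the triangle inequality yields the full local bound. At a right-scattered $\tau$ I would choose $\delta_\tau<\sigma\br{\tau}-\tau$, which forces the set $\cbr{s\in\nbrt{\delta_\tau}{\tau}\st s\geq\tau}$ to equal $\cbr{\tau}$ and trivialises the ``$s,t\geq\tau$'' branch; for the ``$s,t<\tau$'' branch, the Cauchy property of the (existing) left limit of $u$ at $\tau$ -- when $\tau$ is left-dense -- supplies the required bound after further shrinking $\delta_\tau$, and when $\tau$ is left-scattered a still smaller $\delta_\tau$ reduces $\nbrt{\delta_\tau}{\tau}$ to $\cbr{\tau}$. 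The family $\cbr{\nbrt{\delta_\tau}{\tau}\st\tau\in\sbrt{a,b}}$ is an open cover of the compact set $\sbrt{a,b}$, from which I extract a finite subcover, augment it (if necessary) with cover elements centred at $a$ and $b$, and reorder to obtain the required chain $a=\tau_1<\dots<\tau_n=b$.

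For $(\Leftarrow)$, assume \eqref{eq_02_09}. To prove continuity at a right-dense $t_0\in\sbrt{a,b}$, I fix $\eps>0$, invoke \eqref{eq_02_09} to extract the partition, and locate the index $j$ with $t_0\in\nbrt{\delta_j}{\tau_j}$. If $\tau_j$ is right-dense, the local condition applies with $t=t_0$ and an arbitrary $s\in\nbrt{\delta}{t_0}\subseteq\nbrt{\delta_j}{\tau_j}$ for sufficiently small $\delta>0$. If $\tau_j$ is right-scattered, then $t_0\neq\tau_j$ (a right-dense point cannot coincide with a right-scattered one), so for sufficiently small $\delta>0$ the neighbourhood $\nbrt{\delta}{t_0}$ sits inside $\nbrt{\delta_j}{\tau_j}$ and strictly on one side of $\tau_j$, placing $s$ and $t_0$ in the same admissible branch. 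To prove that the left limit exists at a left-dense $t_0$, I would verify the Cauchy criterion analogously: for sufficiently small $\delta>0$, the set $\br{t_0-\delta,t_0}\cap\T$ sits inside $\nbrt{\delta_j}{\tau_j}$ on a single side of $\tau_j$ (or else $\tau_j$ itself is right-dense), so the local condition bounds $\norm{u\br{s}-u\br{t}}_{\E}$ for all such $s,t$.

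The main obstacle will be the case analysis in $(\Leftarrow)$, where one must combine the position of $t_0$ relative to $\tau_j$ (equal, strictly less, or strictly greater) with the density or scatteredness of $\tau_j$. The asymmetry in the clauses ``$s,t<\tau_j$'' versus ``$s,t\geq\tau_j$'' is precisely tailored to absorb a possible jump at $\tau_j$, and the bookkeeping is kept tractable by the observation that a right-dense $t_0$ cannot coincide with a right-scattered $\tau_j$.
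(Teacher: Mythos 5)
Your proposal is correct and follows essentially the same route as the paper: both directions rest on the pointwise ``$\eps$-$\delta$'' reformulation of \definitionRef{def_02} (with the ``$s,t\geq\tau$'' branch trivialised at right-scattered points by taking $\delta_{\tau}<\sigma\br{\tau}-\tau$), a finite-subcover extraction from the compactness of $\sbrt{a,b}$ for ``$\implies$'', and a direct local verification for ``$\impliedby$''. You merely spell out in more detail the steps the paper dispatches with ``it can be easily shown''.
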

\begin{proof}
	It can be easily shown, using the \enquote{$\eps\text{-}\delta$ language}, that \definitionRef{def_02} is equivalent to
	\begin{equation}\label{eq_02_10}
		\begin{aligned}
			&\br{\forall\eps>0}\br{\forall\tau\in\sbrt{a,b}}\br{\exists\delta_{\tau}>0}\br{\forall{s},t\in\nbrt{\delta_{\tau}}{\tau}}\st\\
			&\qquad\br{\br{\tau\text{ is right-dense or }s,t<\tau}\implies\norm{u\br{s}-u\br{t}}_{\E}<\eps}.
		\end{aligned}
	\end{equation}
	If $\tau\in\sbrt{a,b}$ is right-scattered, then the corresponding $\delta_{\tau}$ from \eqref{eq_02_10} can always be chosen as $0<\delta_{\tau}<\sigma\br{\tau}-\tau$ (so that $\lsrbr{\tau,\tau+\delta_{\tau}}\cap\T=\cbr{\tau}$). Hence, we readily obtain that \eqref{eq_02_10}, and thus also \definitionRef{def_02}, is equivalent to (notice the additional \enquote{or $s,t\geq\tau$} part) 
	\begin{equation}\label{eq_02_11}
		\begin{aligned}
			&\br{\forall\eps>0}\br{\forall\tau\in\sbrt{a,b}}\br{\exists\delta_{\tau}>0}\br{\forall{s},t\in\nbrt{\delta_{\tau}}{\tau}}\st\\
			&\qquad\br{\br{\tau\text{ is right-dense or }s,t<\tau\text{ or }s,t\geq\tau}\implies\norm{u\br{s}-u\br{t}}_{\E}<\eps}.
		\end{aligned}
	\end{equation}
	
	\enquote{$\implies$} Assume that $u\in\Crd{\sbrt{a,b}}{\E}$. Let $\eps>0$ be arbitrary. Consider the corresponding $\cbr{\delta_{\tau}}_{\tau\in\sbrt{a,b}}$ from \eqref{eq_02_11}. Obviously, $\bigcup_{\tau\in\sbrt{a,b}}\nbrt{\delta_{\tau}}{\tau}$ forms an open cover of the compact set $\sbrt{a,b}$. Thus, by the topological definition of compactness, there exists a finite subcover. Without loss of generality, let $a=\tau_1<\dots<\tau_n=b$ be such that $\bigcup_{i=1}^n\nbrt{\delta_i}{\tau_i}$ is the corresponding finite subcover of $\sbrt{a,b}$ (where for simplicity we write $\delta_i$ instead of $\delta_{\tau_i}$). This immediately yields that \eqref{eq_02_11} implies \eqref{eq_02_09}.
	
	\enquote{$\impliedby$} Assume that $u$ satisfies \eqref{eq_02_09}. It can be easily shown that for every $\eps>0$, and for every $\tau\in\sbrt{a,b}$, there exists $\delta_{\tau}>0$ such that the conclusion of \eqref{eq_02_10} holds.
\end{proof}

\begin{remark}
	One might wonder whether the additional \enquote{or $s,t\geq\tau$} part from \eqref{eq_02_11} is needed in \eqref{eq_02_09}. Consider the following modification of \eqref{eq_02_09} without that additional part (compare also with \eqref{eq_02_10})
	\begin{equation}\label{eq_02_12}
		\begin{aligned}
			&\br{\forall\eps>0}\br{\exists\delta_1,\dots,\delta_n>0}\\
			&\qquad\br{\exists\tau_1,\dots,\tau_n\in\sbrt{a,b}\st{a}=\tau_1<\dots<\tau_n=b\text{ and }\sbrt{a,b}\subseteq\bigcup_{i=1}^n\nbrt{\delta_i}{\tau_i}}\\
			&\qquad\br{\forall{j}=1,\dots,n}\br{\forall{s},t\in\nbrt{\delta_j}{\tau_j}}\st\\
			&\qquad\br{\br{\tau_j\text{ is right-dense or }s,t<\tau_j}\implies\norm{u\br{s}-u\br{t}}_{\E}<\eps}.
		\end{aligned}
	\end{equation}
	Obviously, \lemmaRef{lm_01} implies that every rd-continuous function satisfies \eqref{eq_02_12} as well. However, the opposite direction does not necessarily hold. This is due to the fact, roughly speaking, that \enquote{$\nbrt{\delta_i}{\tau_i}$ for some right-scattered $\tau_i$ can be so large that $\lsrbr{\tau_i,\tau_i+\delta_i}\cap\T\neq\cbr{\tau_i}$, but no assumptions are put on the function values at the points inside the right neighbourhood of $\tau_i$}. As a counterexample, see \exampleRef{ex_01}.
\end{remark}

\begin{example}\label{ex_01}
	Consider $\E=\R$, $\T\defeq\sbr{0,\frac{1}{2}}\cup\sbr{\frac{3}{4},1}$, and $u\st\T\to\R$ defined as $u\br{\frac{3}{4}}\defeq1$, and $u\br{t}\defeq0$, for $t\in\sbr{0,\frac{1}{2}}\cup\lbrsbr{\frac{3}{4},1}$. Apparently, $u$ is not rd-continuous, as it is not continuous at the right-dense point $\frac{3}{4}$. However, $u$ satisfies \eqref{eq_02_12}, since for every $\eps>0$, and $\tau_1=0,\tau_2=\frac{1}{2},\tau_3=1,\delta_1=\frac{1}{8},\delta_2=\frac{1}{2},\delta_3=\frac{1}{8}$, the conclusion of \eqref{eq_02_12} holds. Nevertheless, such a partition cannot be used in the case of \eqref{eq_02_09}, since \eqref{eq_02_09} would require also $\abs{u\br{\frac{1}{2}}-u\br{\frac{3}{4}}}<\eps$ (due to the fact that $\frac{1}{2},\frac{3}{4}\in\nbrt{\delta_2}{\tau_2}$ and $\frac{1}{2},\frac{3}{4}\geq\tau_2$), which is not true for $\eps<1$.
\end{example}

\begin{lemma}\label{lm_02}
	Let $X\subseteq\Crd{\sbrt{a,b}}{\E}$ be a uniformly rd-equicontinuous family on $\sbrt{a,b}$. Then for all $t_0\in\sbrt{a,b}$ it holds that
	\begin{equation}\label{eq_02_13}
		\begin{aligned}
			&\br{\forall\eps>0}\br{\exists\xi_1,\dots,\xi_m\in\sbrt{a,t_0}\st{a}=\xi_1<\dots<\xi_m=t_0}\\
			&\qquad\br{\forall{u}\in{X}}\br{\forall{j}=1,\dots,m-1}\br{\forall{s},t\in\lsrbrt{\xi_j,\xi_{j+1}}}\st\br{\norm{u\br{s}-u\br{t}}_{\E}<\eps}.
		\end{aligned}
	\end{equation}
\end{lemma}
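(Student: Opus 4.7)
My plan is to apply \definitionRef{def_04} with the given $\eps$ and then refine the resulting partition via a Lebesgue-number argument, while including each right-scattered $\tau_i \in \br{a, t_0}$ as a mandatory partition point.

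To start, applying \definitionRef{def_04} to $X$ with $\eps$ yields $a = \tau_1 < \dots < \tau_n = b$ in $\T$ and $\delta_1, \dots, \delta_n > 0$ satisfying $\sbrt{a,b} \subseteq \bigcup_{i=1}^n \nbrt{\delta_i}{\tau_i}$ together with the oscillation bound from \eqref{eq_02_08}. Viewing the real intervals $\br{\tau_i - \delta_i, \tau_i + \delta_i}$ as an open cover of the compact set $\sbrt{a, t_0} \subseteq \R$, the Lebesgue-number lemma supplies $\lambda > 0$ such that every subset of $\sbrt{a, t_0}$ of diameter at most $\lambda$ is contained in some $\nbrt{\delta_i}{\tau_i}$. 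Let $P \defeq \cbr{a, t_0} \cup \cbr{\tau_i \st \tau_i \in \br{a, t_0} \text{ and } \tau_i \text{ right-scattered}}$, ordered as $a = p_1 < \dots < p_K = t_0$. Between each consecutive pair $p_k, p_{k+1}$, I insert partition points $p_k = \eta_0 < \eta_1 < \dots < \eta_{L_k} = p_{k+1}$ recursively: given $\eta_l < p_{k+1}$, set $\eta_{l+1} \defeq \min\br{\sigma\br{\eta_l}, p_{k+1}}$ when $\sigma\br{\eta_l} \geq \eta_l + \lambda$, and $\eta_{l+1} \defeq \min\br{\sup\br{\T \cap \br{\eta_l, \eta_l + \lambda}}, p_{k+1}}$ otherwise (the supremum lies in $\T$ by closedness). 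Since after an ``otherwise'' step the next element of $\T$ beyond $\eta_{l+1}$ is already at least $\eta_l + \lambda$, two consecutive insertion steps advance $\eta_l$ by at least $\lambda$; thus the process terminates in finitely many steps. Concatenating across all $k$ produces the desired partition $a = \xi_1 < \dots < \xi_m = t_0$.

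To verify \eqref{eq_02_13}, fix $j \in \cbr{1, \dots, m-1}$ and $s, t \in \lsrbrt{\xi_j, \xi_{j+1}}$. If $\xi_{j+1} = \sigma\br{\xi_j}$, the interval reduces to $\cbr{\xi_j}$ and the bound is trivial. Otherwise $\xi_{j+1} - \xi_j \leq \lambda$, so $\lsrbrt{\xi_j, \xi_{j+1}} \subseteq \nbrt{\delta_i}{\tau_i}$ for some $i$ by the Lebesgue-number property. By the choice of $P$ and the fact that $\xi_j, \xi_{j+1}$ are consecutive partition points, no right-scattered $\tau_{i'}$ lies strictly between $\xi_j$ and $\xi_{j+1}$; in particular, the covering $\tau_i$ is either right-dense or satisfies $\tau_i \leq \xi_j$ or $\tau_i \geq \xi_{j+1}$. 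Each possibility matches one of the three subclauses in \eqref{eq_02_08}, yielding $\norm{u\br{s} - u\br{t}}_{\E} < \eps$ uniformly in $u \in X$.

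The main obstacle I anticipate is the last verification: ensuring that one of the three subclauses of \eqref{eq_02_08} is always applicable to the $\tau_i$ whose neighbourhood covers a given $\lsrbrt{\xi_j, \xi_{j+1}}$. This requirement is precisely what forces the a priori inclusion of right-scattered $\tau_i \in \br{a, t_0}$ as $\xi_j$'s and demands careful handling of the boundary cases $\tau_i \in \cbr{a, t_0}$.
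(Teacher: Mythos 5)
Your argument is correct, but it takes a genuinely different route from the paper's. The paper works gap-by-gap between consecutive cover centres $\tau_k,\tau_{k+1}$: it only ever uses the two \enquote{half-neighbourhoods} $\lsrbr{\tau_k,\tau_k+\delta_k}\cap\T$ and $\br{\tau_{k+1}-\delta_{k+1},\tau_{k+1}}\cap\T$, and inserts at most two extra points per gap (either a common point of the two, or $\sup{L}_k$ and $\inf{R}_k$), showing by a contradiction with the covering property that nothing of $\T$ survives between the inserted points. This yields a very economical partition ($m=O\br{n}$) at the price of a delicate case analysis. You instead invoke the Lebesgue number lemma to reduce to subintervals of diameter at most $\lambda$, each of which sits inside a single $\nbrt{\delta_i}{\tau_i}$, and you pre-insert every right-scattered $\tau_i\in\brt{a,t_0}$ as a mandatory partition point precisely so that the trichotomy in \eqref{eq_02_08} (\enquote{right-dense, or $s,t<\tau_i$, or $s,t\geq\tau_i$}) is always applicable to the covering centre; the $\sigma$-steps handle large right-scattered gaps, where the subinterval degenerates to a singleton. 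Your termination argument (two consecutive insertion steps advance by at least $\lambda$) and the final case analysis are sound — in particular the observation that a right-scattered covering centre lying strictly between two consecutive $\xi_j$'s must either belong to $P$ (contradiction) or lie outside $\sbr{a,t_0}$ (whence $\tau_i\geq\xi_{j+1}$ or $\tau_i\leq\xi_j$ anyway) closes the only real loophole. Two cosmetic points: the verification dichotomy should be phrased as \enquote{$\lsrbrt{\xi_j,\xi_{j+1}}$ is a singleton} rather than \enquote{$\xi_{j+1}=\sigma\br{\xi_j}$} (an \enquote{otherwise} step can also land on $\sigma\br{\xi_j}$, which is harmless but makes the two cases overlap), and one should take $\lambda$ to be half a Lebesgue number so that sets of diameter \emph{at most} $\lambda$ are covered. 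Your partition is typically much finer than the paper's, but the verification is arguably cleaner, since it never needs to argue that certain time-scale gaps are empty.
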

\begin{proof}
	Without loss of generality, it is enough to prove the lemma for $t_0=b$. Let $\eps>0$ be arbitrary. Consider the corresponding $\delta_1,\dots,\delta_n>0$ and $\tau_1,\dots,\tau_n\in\sbrt{a,b}$ from \eqref{eq_02_08}. It is easy to see that \eqref{eq_02_08} implies that for every $u\in{X}$, and for all $k=1,\dots,n-1$, the following conclusions hold
	\begin{subequations}
		\begin{align}
			\br{\forall{s},t\in\lsrbr{\tau_k,\tau_k+\delta_k}\cap\T}\st&\br{\norm{u\br{s}-u\br{t}}_{\E}<\eps};\label{eq_02_14_01}\\
			\br{\forall{s},t\in\br{\tau_{k+1}-\delta_{k+1},\tau_{k+1}}\cap\T}\st&\br{\norm{u\br{s}-u\br{t}}_{\E}<\eps}.\label{eq_02_14_02}
		\end{align}
	\end{subequations}
	
	Now, define the finite partition $\cbr{\xi_i}_{i=1}^m$ of $\sbrt{a,b}$ inductively as follows
	\begin{enumerate}[I.]
		\item\label{lm_02_def_01} Define
		\begin{equation*}
			\xi_1\defeq\tau_1.
		\end{equation*}
		\item\label{lm_02_def_02} For all $j\geq1$, if $\xi_j=\tau_k$, for some $1\leq{k}\leq{n-1}$, define
		\begin{subequations}
			\begin{align}
				L_k&\defeq\sbrt{\tau_k,\tau_{k+1}}\cap\lsrbr{\tau_k,\tau_k+\delta_k};\label{eq_02_15_01}\\
				R_k&\defeq\sbrt{\tau_k,\tau_{k+1}}\cap\lbrsbr{\tau_{k+1}-\delta_{k+1},\tau_{k+1}}.\label{eq_02_15_02}
			\end{align}
		\end{subequations}
		Note that both $L_k$ and $R_k$ are non-empty, since $\tau_k\in{L}_k$ and $\tau_{k+1}\in{R}_k$.
		\begin{enumerate}[1)]
			\item If $L_k\cap{R}_k\neq\emptyset$, let $\eta_k\in{L}_k\cap{R}_k$ be arbitrary and define
			\begin{equation*}
				\begin{aligned}
					\xi_{j+1}&\defeq\eta_k;\\
					\xi_{j+2}&\defeq\tau_{k+1}.
				\end{aligned}
			\end{equation*}
			In particular, note that $\xi_{j}\leq\xi_{j+1}\leq\xi_{j+2}$.
			\item If $L_k\cap{R}_k=\emptyset$, define
			\begin{subequations}
				\begin{align}
					\xi_{j+1}&\defeq\sup\cbr{s\st{s}\in{L}_k};\label{eq_02_16_01}\\
					\xi_{j+2}&\defeq\inf\cbr{s\st{s}\in{R}_k};\label{eq_02_16_02}\\
					\xi_{j+3}&\defeq\tau_{k+1}.
				\end{align}
			\end{subequations}
			In particular, note that $\xi_{j}\leq\xi_{j+1}$ and $\xi_{j+2}\leq\xi_{j+3}$. Moreover, it holds also that $\xi_{j+1}\leq\xi_{j+2}$ (otherwise, it would be a contradiction with the fact that $L_k\cap{R}_k=\emptyset$).
		\end{enumerate}
	\end{enumerate}
	Note that the definition of $\cbr{\xi_i}_{i=1}^m$ is correct. Indeed, in \partRef{lm_02_def_02} we cover the case when $\xi_j=\tau_k$, for some $1\leq{k}\leq{n-1}$. For $j=1$, we have that $\xi_1=\tau_1$ (according to \partRef{lm_02_def_01}), thus \partRef{lm_02_def_02} occurs at least once. Moreover, whenever $\xi_j=\tau_k$, for some $1\leq{k}\leq{n-1}$, we have that either $\xi_{j+2}=\tau_{k+1}$, or $\xi_{j+3}=\tau_{k+1}$ (see the individual subcases of \partRef{lm_02_def_02}). In particular, whenever $\xi_j=\tau_k$ occurs in \partRef{lm_02_def_02}, we obtain another pair of the form $\xi_{j^{\prime}}=\tau_{k+1}$ (where $\xi_{j^{\prime}}$ is either $\xi_{j+2}$, or $\xi_{j+3}$). Firstly, this implies that the number of occurrences of \partRef{lm_02_def_02} is finite. Secondly, this implies that \partRef{lm_02_def_02} is sufficient and there is no need for another part of the form \enquote{for all $j\geq1$, if $\xi_j\neq\tau_k$, for all $1\leq{k}\leq{n-1}$}. Moreover, note that $\cbr{\tau_i}_{i=1}^n\subseteq\cbr{\xi_i}_{i=1}^m\subseteq\sbrt{a,b}$, and that $a=\tau_1=\xi_1\leq\xi_2\leq\dots\leq\xi_{m-1}\leq\xi_m=\tau_n=b$.
	
	Now, we verify that \eqref{eq_02_13} holds (up to the fact that $\xi_1\leq\dots\leq\xi_m$ are not necessarily distinct). Let $u\in{X}$ be arbitrary. Due to the definition of $\cbr{\xi_i}_{i=1}^m$, it is enough to verify that the conclusion of \eqref{eq_02_13} holds for all $j\geq1$ such that $\xi_j=\tau_k$, for some $1\leq{k}\leq{n-1}$, and for both the subcases from \partRef{lm_02_def_02} of the definition of $\cbr{\xi_i}_{i=1}^m$. Let $j_0\geq1$ be arbitrary such that $\xi_{j_0}=\tau_k$, for some $1\leq{k}\leq{n-1}$. Consider the individual subcases from \partRef{lm_02_def_02}
	\begin{enumerate}[1)]
		\item Since $\eta_k\in{L}_k$, it follows from \eqref{eq_02_15_01} that $\tau_k\leq\eta_k<\tau_k+\delta_k$. Keeping in mind that $\xi_{j_0}=\tau_k$ and $\xi_{j_0+1}=\eta_k$, the conclusion of \eqref{eq_02_13} is true for $j=j_0$ by \eqref{eq_02_14_01}. Similarly, since $\eta_k\in{R}_k$, it follows from \eqref{eq_02_15_02} that $\tau_{k+1}-\delta_{k+1}<\eta_k\leq\tau_{k+1}$. Keeping in mind that $\xi_{j_0+1}=\eta_k$ and $\xi_{j_0+2}=\tau_{k+1}$, the conclusion of \eqref{eq_02_13} is true for $j=j_0+1$ by \eqref{eq_02_14_02}.
		\item By \eqref{eq_02_16_01} and \eqref{eq_02_15_01}, it follows that $\xi_{j_0+1}\leq\tau_k+\delta_k$. Keeping in mind that $\xi_{j_0}=\tau_k$, the conclusion of \eqref{eq_02_13} is true for $j=j_0$ by \eqref{eq_02_14_01}.
		
		Similarly, from \eqref{eq_02_16_02} and \eqref{eq_02_15_02}, it follows that $\xi_{j_0+2}\geq\tau_{k+1}-\delta_{k+1}$. We claim that the case \enquote{=} cannot occur. Indeed, suppose that $\xi_{j_0+2}=\tau_{k+1}-\delta_{k+1}$. According to \eqref{eq_02_15_02}, this means that $\xi_{j_0+2}\notin{R}_k$, and thus $\xi_{j_0+2}$ is right-dense (otherwise, it would be a contradiction with \eqref{eq_02_16_02}). Moreover, $\xi_{j_0+2}\notin{L}_k$ (otherwise, according to \eqref{eq_02_15_01}, it would mean that $\xi_{j_0+2}<\tau_k+\delta_k$, which due to \eqref{eq_02_16_02} and the fact that $\xi_{j_0+2}$ is right-dense would mean that $L_k\cap{R}_k\neq\emptyset$, which is a contradiction). Since $\xi_{j_0+2}\notin{L}_k$, \eqref{eq_02_15_01} yields that $\xi_{j_0+2}\geq\tau_k+\delta_k$. Together with the assumption that $\xi_{j_0+2}=\tau_{k+1}-\delta_{k+1}$, we obtain a contradiction with the fact that $\xi_{j_0+2}\in\sbrt{a,b}\subseteq\bigcup_{i=1}^n\nbrt{\delta_i}{\tau_i}$. Thus, it must hold that $\xi_{j_0+2}>\tau_{k+1}-\delta_{k+1}$, which implies that the conclusion of \eqref{eq_02_13} is true for $j=j_0+2$ by \eqref{eq_02_14_02} (recall that $\xi_{j_0+3}=\tau_{k+1}$).
		
		Finally, we claim that $\brt{\xi_{j_0+1},\xi_{j_0+2}}=\emptyset$. Indeed, suppose that there exists $\omega\in\T$ such that $\xi_{j_0+1}<\omega<\xi_{j_0+2}$. By \eqref{eq_02_16_01}, it follows that $\omega\notin{L}_k$. Thus, according to \eqref{eq_02_15_01}, $\omega\geq\tau_k+\delta_k$. Similarly, from \eqref{eq_02_16_02} and \eqref{eq_02_15_02}, it follows that $\omega\leq\tau_{k+1}-\delta_{k+1}$. Hence, we obtain a contradiction with the fact that $\omega\in\sbrt{a,b}\subseteq\bigcup_{i=1}^n\nbrt{\delta_i}{\tau_i}$. Thus, it must hold that $\brt{\xi_{j_0+1},\xi_{j_0+2}}=\emptyset$, which means that the conclusion of \eqref{eq_02_13} is trivially true for $j=j_0+1$.
	\end{enumerate}
	Finally, removing the duplicates from $\cbr{\xi_i}_{i=1}^m$, we obtain precisely \eqref{eq_02_13}.
\end{proof}

\subsection{Initial value problem for dynamic equations on time scales}\label{sec_02_03}
In this subsection we introduce an initial value problem (IVP) for dynamic equations on time scales in Banach spaces. Also, we discuss under what conditions does the equivalence between solutions to the IVP and the corresponding Volterra-type integral equation hold.

Let $u_0\in\E$ and $f\st\sbrt{a,b}\times\E\to\E$. Consider the following IVP
\begin{subequations}\label{eq_02_17}
	\begin{align}
		u^{\Delta}\br{t}&=f\br{t,u\br{t}},\quad{t}\in\sbrtk{a,b},\label{eq_02_17_01}\\
		u\br{a}&=u_0.\label{eq_02_17_02}
	\end{align}
\end{subequations}
By a local solution of \eqref{eq_02_17} on $\sbrt{a,c}\subseteq\sbrt{a,b}$, we mean $u\in\CrdOne{\sbrt{a,c}}{\E}$ such that \eqref{eq_02_17_01} holds for all $t\in\sbrtk{a,c}$ and the initial condition \eqref{eq_02_17_02} is satisfied. Under certain assumptions (specified in \theoremRef{thm_03}), solving \eqref{eq_02_17} is equivalent to solving the following Volterra-type integral equation
\begin{equation}\label{eq_02_18}
	u\br{t}=u_0+\DeltaAint{a}{t}{f\br{s,u\br{s}}\Deltas},\quad{t}\in\sbrt{a,b}.
\end{equation}

Before stating the theorem concerning the equivalence of solutions of \eqref{eq_02_17} and \eqref{eq_02_18}, we need to discuss restrictions of rd-continuous functions to time scale subintervals.

\begin{remark}\label{rem_06}
	Let $u\in\Crd{\sbrt{a,b}}{\E}$ and $\sbrt{a,c}\subset\sbrt{a,b}$. Consider the restriction $u\arrowvert_{\sbrt{a,c}}$ of $u$ to $\sbrt{a,c}$. We are interested whether $u\arrowvert_{\sbrt{a,c}}$ is rd-continuous (according to \definitionRef{def_02} and \remarkRef{rem_01}) on $\sbrt{a,c}$. Obviously, $u\arrowvert_{\sbrt{a,c}}$ satisfies \definitionRef{def_02} for every $t\in\lsrbrt{a,c}$. However, according to our convention described in \remarkRef{rem_01}, in order to be rd-continuous on $\sbrt{a,c}$, $u\arrowvert_{\sbrt{a,c}}$ would need to be continuous at $c$ (since the point $c$ is always right-dense with respect to the time scale $\sbrt{a,c}$). If in the \textit{original} time scale $\sbrt{a,b}$, $c\in\sbrt{a,b}$ is right-scattered and left-dense, it may happen that $u$ is not continuous at that point. Thus, in such a case, $u\arrowvert_{\sbrt{a,c}}$ is not rd-continuous on $\sbrt{a,c}$, since it is not continuous at $c\in\sbrt{a,c}$, which in the \textit{new} time scale $\sbrt{a,c}$ is right-dense. However, if in the \textit{original} time scale $\sbrt{a,b}$, $c\in\sbrt{a,b}$ is right-dense or left-scattered, $u$ must be continuous at $c\in\sbrt{a,b}$, which means that $u\arrowvert_{\sbrt{a,c}}$ is continuous at $c\in\sbrt{a,c}$ as well (and thus $u\arrowvert_{\sbrt{a,c}}\in\Crd{\sbrt{a,c}}{\E}$).
	
	To sum up, if $c\in\sbrt{a,b}$ is right-dense or left-scattered, the restriction $u\arrowvert_{\sbrt{a,c}}$ is always rd-continuous on $\sbrt{a,c}$. However, rd-continuity of $u\arrowvert_{\sbrt{a,c}}$ is not guaranteed if $c\in\sbrt{a,b}$ is right-scattered and left-dense. As a counterexample, see \exampleRef{ex_02}.
\end{remark}

\begin{example}\label{ex_02}
	Consider $\E=\R$, $\T\defeq\sbr{0,1}\cup\sbr{2,3}$, and $u\st\T\to\R$ defined as $u\br{1}\defeq1$, and $u\br{t}\defeq0$, for $t\in\lsrbr{0,1}\cup\sbr{2,3}$. Obviously, $u$ is rd-continuous on $\T$. However, its restriction $u\arrowvert_{\sbr{0,1}}$ is not rd-continuous on $\sbr{0,1}$, since it is not continuous at $t=1$ (which is right-dense with respect to $\sbr{0,1}$).
\end{example}

However, as we will see in the next remark, subintervals of the form $\sbrt{a,\sigma\br{c}}$ preserve rd-continuity, regardless of the type of the point $c\in\sbrt{a,b}$.

\begin{remark}\label{rem_07}
	Let $u\in\Crd{\sbrt{a,b}}{\E}$. It is easy to verify that for every $c\in\sbrt{a,b}$, $\sigma\br{c}$ is always right-dense or left-scattered with respect to $\sbrt{a,b}$. Thus, according to \remarkRef{rem_06}, the restriction $u\arrowvert_{\sbrt{a,\sigma\br{c}}}$ of $u$ to $\sbrt{a,\sigma\br{c}}$ is guaranteed to be rd-continuous on $\sbrt{a,\sigma\br{c}}$, for every $c\in\sbrt{a,b}$.
\end{remark}

\begin{remark}\label{rem_08}
	Let $X\subseteq\Crd{\sbrt{a,b}}{\E}$ be a uniformly rd-equicontinuous family on $\sbrt{a,b}$. By similar arguments as in \remarkRef{rem_06} and \remarkRef{rem_07} (the points of the form $\sigma\br{\cdot}$ being right-dense or left-scattered), one can establish directly from \definitionRef{def_04} that the restriction of $X$ to $\sbrt{a,\sigma\br{c}}$ is uniformly rd-equicontinuous on $\sbrt{a,\sigma\br{c}}$, for every $c\in\sbrt{a,b}$.
\end{remark}

Now we are ready to state and prove the theorem regarding the equivalence of solutions of the IVP \eqref{eq_02_17} and the solutions of the Volterra-type integral equation \eqref{eq_02_18}. Even though such an equivalence is well-known and widely used in the literature, we believe that a technical, yet important, detail is often missing there, namely the fact that the restriction of an rd-continuous function to a time scale subinterval need not be rd-continuous (see \exampleRef{ex_02} for a counterexample). In \remarkRef{rem_09} we discuss what implications this might have on the relationship between the solutions of \eqref{eq_02_17} and \eqref{eq_02_18}. Then, in \theoremRef{thm_03} we provide such an equivalence theorem, although in a different form compared to such statements found in the literature. Finally, special cases of this general theorem, which might be more suitable for applications, are provided in \theoremRef{thm_04} and \theoremRef{thm_05}, respectively.

\begin{theorem}\label{thm_03}
	Consider the IVP \eqref{eq_02_17}. Let $u\in\C{\sbrt{a,b}}{\E}$ be such that $f\br{\cdot,u\br{\cdot}}\in\Crd{\sbrt{a,b}}{\E}$. Then for every $c\in\sbrt{a,b}$, $u\arrowvert_{\sbrt{a,\sigma\br{c}}}$ is a local solution of \eqref{eq_02_17} on $\sbrt{a,\sigma\br{c}}$ if and only if it satisfies \eqref{eq_02_18}, for all $t\in\sbrt{a,\sigma\br{c}}$.
\end{theorem}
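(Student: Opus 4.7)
The plan is to derive both directions from Definition 3 (antiderivative), Remark 3 (which yields an antiderivative for any rd-continuous function), and Remark 7 (which ensures that the restriction of $f\br{\cdot, u\br{\cdot}}$ to $\sbrt{a,\sigma\br{c}}$ remains rd-continuous regardless of the nature of $c\in\sbrt{a,b}$).

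For the forward implication, I would assume that $u\arrowvert_{\sbrt{a,\sigma\br{c}}}$ is a local solution of \eqref{eq_02_17}. By definition of a local solution, this means $u\arrowvert_{\sbrt{a,\sigma\br{c}}} \in \CrdOne{\sbrt{a,\sigma\br{c}}}{\E}$ with $u^{\Delta}\br{t} = f\br{t, u\br{t}}$ on $\sbrtk{a,\sigma\br{c}}$ and $u\br{a} = u_0$. Hence $u$ is itself an antiderivative of $f\br{\cdot, u\br{\cdot}}$ on $\sbrt{a,\sigma\br{c}}$, so Definition 3 immediately gives $\DeltaAint{a}{t}{f\br{s, u\br{s}}\Deltas} = u\br{t} - u\br{a} = u\br{t} - u_0$ for every $t \in \sbrt{a,\sigma\br{c}}$, which rearranges to \eqref{eq_02_18}.

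For the converse, I would first set $t = a$ in \eqref{eq_02_18} to recover $u\br{a} = u_0$. Then, invoking Remark 7, the restriction of $f\br{\cdot, u\br{\cdot}}$ to $\sbrt{a,\sigma\br{c}}$ lies in $\Crd{\sbrt{a,\sigma\br{c}}}{\E}$, so by Remark 3 the map $F\br{t} \defeq \DeltaAint{a}{t}{f\br{s, u\br{s}}\Deltas}$ is an antiderivative of $f\br{\cdot, u\br{\cdot}}$ on $\sbrt{a,\sigma\br{c}}$, i.e.\ it is $\Delta$-differentiable on $\sbrtk{a,\sigma\br{c}}$ with $F^{\Delta}\br{t} = f\br{t, u\br{t}}$. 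Since $u = u_0 + F$ on $\sbrt{a,\sigma\br{c}}$, linearity of the $\Delta$-derivative (with the constant $u_0$ contributing zero) yields $u^{\Delta}\br{t} = f\br{t, u\br{t}}$ on $\sbrtk{a,\sigma\br{c}}$. Continuity of $u\arrowvert_{\sbrt{a,\sigma\br{c}}}$ is inherited from $u \in \C{\sbrt{a,b}}{\E}$, and $u^{\Delta}\br{\cdot} = f\br{\cdot, u\br{\cdot}} \in \Crd{\sbrtk{a,\sigma\br{c}}}{\E}$, so $u\arrowvert_{\sbrt{a,\sigma\br{c}}} \in \CrdOne{\sbrt{a,\sigma\br{c}}}{\E}$, confirming that it is a local solution.

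The main obstacle I expect is the technical reason why the theorem must be stated in terms of $\sigma\br{c}$ rather than an arbitrary $c$. As Remark 6 illustrates, rd-continuity need not be preserved under restriction to $\sbrt{a,c}$ when $c$ is right-scattered and left-dense in the original time scale, which would invalidate the use of Remark 3 in the converse direction. Passing to $\sbrt{a,\sigma\br{c}}$ guarantees, via Remark 7, that the new right endpoint is either right-dense or left-scattered in $\sbrt{a,b}$, so rd-continuity is preserved upon restriction. Once this subtlety is properly handled, the remainder of the argument is routine manipulation of Definition 3 and Remark 3.
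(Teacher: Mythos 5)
Your proposal is correct and follows essentially the same route as the paper: the paper's own proof is precisely the combination of \remarkRef{rem_07} (to ensure rd-continuity of the restriction $f\br{\cdot,u\br{\cdot}}\arrowvert_{\sbrt{a,\sigma\br{c}}}$), \definitionRef{def_03} for the forward direction, and \remarkRef{rem_03} for the converse, which you have simply written out in full detail. Your closing discussion of why the right endpoint must be of the form $\sigma\br{c}$ matches the paper's own commentary in \remarkRef{rem_06} and \remarkRef{rem_09}.
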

\begin{proof}
	According to \remarkRef{rem_07}, it holds that $f\br{\cdot,u\br{\cdot}}\arrowvert_{\sbrt{a,\sigma\br{c}}}\in\Crd{\sbrt{a,\sigma\br{c}}}{\E}$, for every $c\in\sbrt{a,b}$. The theorem is then an immediate consequence of \definitionRef{def_03} and \remarkRef{rem_03}, respectively.
\end{proof}

\begin{remark}\label{rem_09}
	In \theoremRef{thm_03}, the assumption of the local interval having the right endpoint of the form $\sigma\br{\cdot}$ is crucial. Otherwise, the \enquote{$\impliedby$} direction in \theoremRef{thm_03} does not necessarily hold. As a counterexample, see \exampleRef{ex_03}. Note that the failure of \theoremRef{thm_03} there is due to the fact that, in \exampleRef{ex_03}, the equality $\br{\Deltaint{0}{\cdot}{f\arrowvert_{\sbr{0,1}}\br{s,0}\Deltas}}^{\Delta}\br{t}=f\arrowvert_{\sbr{0,1}}\br{t,0}$ holds only for $t\in\lsrbr{0,1}$ (but not for $t=1$). Finally, note that this is not a contradiction with \remarkRef{rem_03}, since $f\arrowvert_{\sbr{0,1}}\br{\cdot,0}\notin\Crd{\sbr{0,1}}{\R}$.
\end{remark}

\begin{example}\label{ex_03}
	Consider \eqref{eq_02_17} with $\E=\R$, $u_0\defeq0$, $\T\defeq\sbr{0,1}\cup\sbr{2,3}$, and $f\st\T\times\R\to\R$ defined as $f\br{1,x}\defeq1$, for $x\in\R$, and $f\br{t,x}\defeq0$, for $\br{t,x}\in\br{\lsrbr{0,1}\cup\sbr{2,3}}\times\R$ (compare with \exampleRef{ex_02}). We are interested in local solutions of \eqref{eq_02_17} on $\sbr{0,1}$. It is easy to see that for $u\equiv0$ on $\T$, it holds that $f\br{\cdot,0}\in\Crd{\T}{\R}$. However, the restriction of $f\br{\cdot,0}$ to $\sbr{0,1}$ is not rd-continuous (but rather so called \textit{regulated}). The notion of Cauchy $\Delta$-integral in \eqref{eq_02_18} for such functions is well-defined (although with a bit more general definition than \definitionRef{def_03}). In such a case, we would have that $\Deltaint{0}{t}{f\arrowvert_{\sbr{0,1}}\br{s,0}\Deltas}=0$, for all $t\in\sbr{0,1}$. Thus, $u\equiv0$ satisfies \eqref{eq_02_18}, for all $t\in\sbr{0,1}$. However, $u\equiv0$ satisfies \eqref{eq_02_17_01} only for $t\in\lsrbr{0,1}$, but not for $t=1$. Hence, $u\equiv0$ is a solution of \eqref{eq_02_18} on $\sbr{0,1}$, but it is not a local solution of \eqref{eq_02_17} on $\sbr{0,1}$.
\end{example}

\begin{remark}
	Note that in \theoremRef{thm_03}, it is implicitly assumed that $u\in\C{\sbrt{a,b}}{\E}$ is such that $f\br{\cdot,u\br{\cdot}}\in\Crd{\sbrt{a,b}}{\E}$. This is because, for a given $c\in\sbrt{a,b}$, we need to guarantee that $f\br{\cdot,u\br{\cdot}}\arrowvert_{\sbrt{a,\sigma\br{c}}}\in\Crd{\sbrt{a,\sigma\br{c}}}{\E}$. According to \remarkRef{rem_07}, this is guaranteed (for the intervals of the form $\sbrt{a,\sigma\br{\cdot}}$) by $f\br{\cdot,u\br{\cdot}}$ being rd-continuous on $\sbrt{a,b}$. Otherwise, if the restriction is not rd-continuous, the conclusion of \theoremRef{thm_03} might not necessarily hold, as was shown in \exampleRef{ex_03}. However, in applications it is often more convenient to have the assumptions on $f$ directly, without making further a priori requirements on $u$ (apart from $u$ being continuous). Such sufficient conditions are provided in the next theorems.
\end{remark}

\begin{theorem}\label{thm_04}
	Consider the IVP \eqref{eq_02_17}. Let $f$ be continuous on $\sbrt{a,b}\times\E$, and let $u\in\C{\sbrt{a,b}}{\E}$. Then for every $c\in\sbrt{a,b}$, $u\arrowvert_{\sbrt{a,\sigma\br{c}}}$ is a local solution of \eqref{eq_02_17} on $\sbrt{a,\sigma\br{c}}$ if and only if it satisfies \eqref{eq_02_18}, for all $t\in\sbrt{a,\sigma\br{c}}$.
\end{theorem}
\begin{proof}
	Since $f$ is continuous on $\sbrt{a,b}\times\E$ and $u\in\C{\sbrt{a,b}}{\E}$, it is well-known that $f\br{\cdot,u\br{\cdot}}\in\C{\sbrt{a,b}}{\E}$ as well. Thus, the claim follows from \theoremRef{thm_03}. 
\end{proof}

\begin{theorem}\label{thm_05}
	Consider the IVP \eqref{eq_02_17}. Let $f\st\sbrt{a,b}\times\ball{\E}{u_0}{\beta}\to\E$, for some $\beta>0$. Assume that
	\begin{enumerate}[(i)]
		\item\label{thm_05_assum_01} $f\br{\cdot,x}$ is rd-continuous on $\sbrt{a,b}$, for all $x\in\ball{\E}{u_0}{\beta}$.
		\item\label{thm_05_assum_02} The family $\cbr{f\br{t,\cdot}}_{t\in\sbrt{a,b}}$ is uniformly equicontinuous on $\ball{\E}{u_0}{\beta}$.
	\end{enumerate}
	Then for every $c\in\sbrt{a,b}$, $u\in\C{\sbrt{a,\sigma\br{c}}}{\E}$ such that $\cbr{u\br{t}\st{t}\in\sbrt{a,\sigma\br{c}}}\subseteq\ball{\E}{u_0}{\beta}$ is a local solution of \eqref{eq_02_17} on $\sbrt{a,\sigma\br{c}}$ if and only if it satisfies \eqref{eq_02_18}, for all $t\in\sbrt{a,\sigma\br{c}}$. 
\end{theorem}
\begin{proof}
	Let $c\in\sbrt{a,b}$ and $u\in\C{\sbrt{a,\sigma\br{c}}}{\E}$ such that $\cbr{u\br{t}\st{t}\in\sbrt{a,\sigma\br{c}}}\subseteq\ball{\E}{u_0}{\beta}$ be arbitrary. Define a continuous extension $\tilde{u}$ of $u$ to $\sbrt{a,b}$ as $\tilde{u}\br{t}\defeq{u}\br{t}$, for $t\in\sbrt{a,\sigma\br{c}}$, and $\tilde{u}\br{t}\defeq{u}\br{\sigma\br{c}}$, for $t\in\lbrsbrt{\sigma\br{c},b}$. In particular, note that $\tilde{u}$ is indeed continuous. Moreover, using \assumptionRef{thm_05_assum_01} and \assumptionRef{thm_05_assum_02}, together with the continuity of $\tilde{u}$ and the following triangle inequality
	\begin{equation}\label{eq_02_19}
		\norm{f\br{s,\tilde{u}\br{s}}-f\br{t,\tilde{u}\br{t}}}_{\E}\leq\norm{f\br{s,\tilde{u}\br{s}}-f\br{t,\tilde{u}\br{s}}}_{\E}+\norm{f\br{t,\tilde{u}\br{s}}-f\br{t,\tilde{u}\br{t}}}_{\E},
	\end{equation}
	for $s,t\in\sbrt{a,b}$, it is rather easy to verify that $f\br{\cdot,\tilde{u}\br{\cdot}}\in\Crd{\sbrt{a,b}}{\E}$. Thus, applying \theoremRef{thm_03} on $\tilde{u}$, the claim follows.
\end{proof}

\begin{remark}
	\theoremRef{thm_05} might be more natural than \theoremRef{thm_04}, as one often works with rd-continuity in the context of time scales. Yet, \theoremRef{thm_04} is important, as it coincides with the well-known result from the theory of classical differential equations in Banach spaces.
\end{remark}

\begin{remark}
	Note that \assumptionRef{thm_05_assum_02} of \theoremRef{thm_05} cannot be replaced by the weaker assumption of $f\br{t,\cdot}$ being continuous on $\ball{\E}{u_0}{\beta}$, for every $t\in\sbrt{a,b}$. Otherwise, with such a weaker assumption, it might not necessarily hold that $f\br{\cdot,u\br{\cdot}}\in\Crd{\sbrt{a,b}}{\E}$, for $u\in\C{\sbrt{a,b}}{\E}$. For a counterexample, see \exampleRef{ex_04}.
\end{remark}

\begin{example}\label{ex_04}
	Consider $\E=\R$, $\T\defeq\R$ and $f\st\R^2\to\R$ defined as $f\br{0,0}\defeq0$, and $f\br{t,x}\defeq\frac{tx}{t^2+x^2}$, for $\br{0,0}\neq\br{t,x}\in\R^2$. Obviously, $f\br{t,\cdot}$ is continuous on $\R$, for every $t\in\R$, and $f\br{\cdot,x}$ is continuous on $\R$, for every $x\in\R$. However, for $u$ defined as $u\br{t}\defeq{t}$, for $t\in\R$ (i.e. for $u$ being the identity function), the composition $f\br{\cdot,u\br{\cdot}}$ is not continuous at $0$, as one can easily verify, despite the fact that $u$ is continuous on $\R$.
\end{example}

\subsection{Measures of noncompactness}
In this subsection we introduce the axiomatic theory of measures of noncompactness, which is thoroughly described in Chapter $3$ in \cite{Banas_Goebel}. Also, we prove some further auxiliary results which are necessary for our main theorem.

Firstly, we define $\fM_{\E}$ as the family of all the non-empty and bounded subsets of $\E$. Similarly, we define $\fN_{\E}$ as the family of all the non-empty and relatively compact subsets of $\E$ (i.e. $\fN_{\E}$ is the family of all the non-empty subsets of $\E$ with compact closure).

Let $X\subseteq\cbr{u\st\sbrt{a,b}\to\E}$ be a non-empty family of functions. For $t\in\sbrt{a,b}$, we define $X\br{t}\defeq\cbr{u\br{t}\st{u}\in{X}}\subseteq\E$. Moreover (provided that the integrals below are well-defined), for $t\in\sbrt{a,b}$, we define
\begin{equation*}
	\DeltaAint{a}{t}{X\br{s}\Deltas}\defeq\cbr{\DeltaAint{a}{t}{u\br{s}\Deltas}\st{u}\in{X}}\subseteq\E.
\end{equation*}
In addition, let $f\st\sbrt{a,b}\times\E\to\E$ be arbitrary. In the same manner as above, for $t\in\sbrt{a,b}$, we define $f\br{t,X\br{t}}\defeq\cbr{f\br{t,u\br{t}}\st{u}\in{X}}\subseteq\E$. Analogously, we define $f\br{\cdot,X\br{\cdot}}\defeq\cbr{f\br{\cdot,u\br{\cdot}}\st{u}\in{X}}\subseteq\cbr{u\st\sbrt{a,b}\to\E}$. Furthermore (provided that the integrals below are well-defined), for $t\in\sbrt{a,b}$, we define
\begin{equation*}
	\DeltaAint{a}{t}{f\br{s,X\br{s}}\Deltas}\defeq\cbr{\DeltaAint{a}{t}{f\br{s,u\br{s}}\Deltas}\st{u}\in{X}}\subseteq\E.
\end{equation*}

Finally, for $X,Y\subseteq\E$ and $\lambda_1,\lambda_2\in\R$, the linear combination $\lambda_1{X}+\lambda_2{Y}$ is defined as
\begin{equation*}
	\lambda_1{X}+\lambda_2{Y}\defeq\cbr{\lambda_1{x}+\lambda_2{y}\st{x}\in{X},y\in{Y}}\subseteq\E.
\end{equation*}

\begin{definition}[\normalfont{Measure of noncompactness}]\label{def_05}
	The mapping $\mu\st\fM_{\E}\to\lsrbr{0,\infty}$ is called a \textit{measure of noncompactness} in the space $\E$, if the following conditions hold
	\begin{enumerate}[(i)]
		\item\label{def_05_ax_01} The family $\ker\br{\mu}\defeq\cbr{X\in\fM_{\E}\st\mu\br{X}=0}$ is non-empty and $\ker\br{\mu}\subseteq\fN_{\E}$.
		\item\label{def_05_ax_02} If $X\subseteq{Y}$, then $\mu\br{X}\leq\mu\br{Y}$, for all $X,Y\in\fM_{\E}$.
		\item\label{def_05_ax_03} $\mu\br{\cl{X}}=\mu\br{X}$, for all $X\in\fM_{\E}$.
		\item\label{def_05_ax_04} $\mu\br{\conv{X}}=\mu\br{X}$, for all $X\in\fM_{\E}$.
		\item\label{def_05_ax_05} $\mu\br{\lambda{X}+\br{1-\lambda}Y}\leq\lambda\cdot\mu\br{X}+\br{1-\lambda}\cdot\mu\br{Y}$, for all $\lambda\in\sbr{0,1}$, for all $X,Y\in\fM_{\E}$.
		\item\label{def_05_ax_06} If $\cbr{X_k}_{k=1}^{\infty}\subseteq\fM_{\E}$ is a sequence of closed sets such that $X_{k+1}\subseteq{X}_k$, for all $k=1,2,\dots$, and if $\lim_{k\to\infty}\mu\br{X_k}=0$, then $\bigcap_{k=1}^{\infty}X_{k}\neq\emptyset$.
	\end{enumerate}
\end{definition}

\begin{remark}\label{rem_13}
	Let $\mu$ be a measure of noncompactness in the space $\E$. Let $\cbr{X_k}_{k=1}^{\infty}\subseteq\fM_{\E}$ be a sequence of closed sets such that $X_{k+1}\subseteq{X}_k$, for all $k=1,2,\dots$, and that $\lim_{k\to\infty}\mu\br{X_k}=0$. Then, it is rather easy to show that $X_{\infty}\defeq\bigcap_{k=1}^{\infty}X_k\in\fN_{\E}$ (i.e. $X_{\infty}$ is non-empty and relatively compact in $\E$). Indeed, \axiomRef{def_05_ax_06} of \definitionRef{def_05} readily implies that $X_{\infty}\neq\emptyset$. Moreover, since $X_{\infty}\subseteq{X}_k$, for every $k=1,2,\dots$, using the fact that $\lim_{k\to\infty}\mu\br{X_k}=0$ and \axiomRef{def_05_ax_02} of \definitionRef{def_05}, it can be easily shown that $\mu\br{X_{\infty}}=0$. Thus, according to \axiomRef{def_05_ax_01} of \definitionRef{def_05}, $X_{\infty}\in\ker\br{\mu}\subseteq\fN_{\E}$.
\end{remark}

\begin{remark}\label{rem_14}
	Let $\mu$ be a measure of noncompactness in the space $\E$. Let $\cbr{u_k}_{k=1}^{\infty}\subseteq\E$ be a sequence such that
	\begin{equation*}
		\lim_{k\to\infty}\mu\br{\cbr{u_k,u_{k+1},u_{k+2},\dots}}=0.
	\end{equation*}
	With the use of \axiomRef{def_05_ax_03} and \axiomRef{def_05_ax_06} of \definitionRef{def_05}, it is rather easy to show that $\bigcap_{k=1}^{\infty}\cl{\cbr{u_k,u_{k+1},u_{k+2},\dots}}\neq\emptyset$, which means that $\cbr{u_k}_{k=1}^{\infty}\subseteq\E$ has at least one cluster point (i.e. $\cbr{u_k}_{k=1}^{\infty}\subseteq\E$ contains a convergent subsequence).
\end{remark}

\begin{definition}\label{def_06}
	We say that a measure of noncompactness $\mu$ in the space $\E$ is \textit{sublinear}, if
	\begin{enumerate}[(i)]\addtocounter{enumi}{6}
		\item $\mu\br{\lambda{X}}=\abs{\lambda}\cdot\mu\br{X}$, for all $\lambda\in\R$, for all $X\in\fM_{\E}$.
		\item $\mu\br{X+Y}\leq\mu\br{X}+\mu\br{Y}$, for all $X,Y\in\fM_{\E}$.
	\end{enumerate}
\end{definition}

\begin{remark}
	In the theory of time scales, $\mu$ usually denotes the so called \textit{graininess} function. However, in this paper we use $\mu$ to denote a measure of noncompactness, as it is also often used in this context in the literature. Moreover, as we will work with measures of noncompactness in different spaces, a measure of noncompactness $\mu$ in the space $\E$ will be denoted by $\mu_{\E}$.
\end{remark}

\begin{lemma}\label{lm_03}
	Let $\mu_{\E}$ be a sublinear measure of noncompactness and $X\subseteq\cbr{u\st\sbrt{a,b}\to\E}$ be a non-empty family of equibounded functions. Then for all $s,t\in\sbrt{a,b}$ it holds that
	\begin{equation}\label{eq_02_20}
		\abs{\mu_{\E}\br{X\br{s}}-\mu_{\E}\br{X\br{t}}}\leq{K}_{\mu_{\E}}\cdot\sup_{u\in{X}}\norm{u\br{s}-u\br{t}}_{\E},
	\end{equation}
	where $K_{\mu_{\E}}\defeq\mu_{\E}\br{\ball{\E}{0_{\E}}{1}}\geq0$ is a non-negative constant.
\end{lemma}
\begin{proof}
	Firstly, note that since the family $X$ is non-empty and equibounded, $\mu_{\E}\br{X\br{t}}$ is well-defined (i.e. $X\br{t}\in\fM_{\E}$), for all $t\in\sbrt{a,b}$.
	
	According to Chapter $3$ in \cite{Banas_Goebel}, for every sublinear measure of noncompactness $\mu_{\E}$, and for all $Y,Z\in\fM_{\E}$, it holds that
	\begin{equation}\label{eq_02_21}
		\abs{\mu_{\E}\br{Y}-\mu_{\E}\br{Z}}\leq\mu_{\E}\br{\ball{\E}{0_{\E}}{1}}\cdot\max\cbr{d\br{Y,Z},d\br{Z,Y}},
	\end{equation}
	where $d\br{Y,Z}$ is defined as
	\begin{equation}\label{eq_02_22}
		d\br{Y,Z}\defeq\inf\cbr{r>0\st{Y}\subseteq\bigcup_{z\in{Z}}\openball{\E}{z}{r}}.
	\end{equation}
	
	Let $s,t\in\sbrt{a,b}$ be arbitrary. Define
	\begin{equation*}
		\nu\defeq\sup_{u\in{X}}\norm{u\br{s}-u\br{t}}_{\E}.
	\end{equation*}
	Obviously, it holds that
	\begin{equation*}
		\br{\forall{r}>0}\br{\forall{u}\in{X}}\st\br{u\br{s}\in\openball{\E}{u\br{t}}{\nu+r}},
	\end{equation*}
	which yields that
	\begin{equation}\label{eq_02_23}
		\br{\forall{r}>0}\st\br{X\br{s}\subseteq\bigcup_{u\in{X}}\openball{\E}{u\br{t}}{\nu+r}}.
	\end{equation}
	In the view of \eqref{eq_02_22}, we immediately obtain from \eqref{eq_02_23} that $d\br{X\br{s},X\br{t}}\leq\nu$. Similarly, it holds that $d\br{X\br{t},X\br{s}}\leq\nu$. Keeping in mind these estimates, \eqref{eq_02_20} follows from \eqref{eq_02_21}.
\end{proof}

\begin{lemma}\label{lm_04}
	Let $\mu_{\E}$ be a sublinear measure of noncompactness. Let $X\subseteq\Crd{\sbrt{a,b}}{\E}$ be a non-empty, uniformly rd-equicontinuous and equibounded family on $\sbrt{a,b}$. Then for all $t_0\in\sbrt{a,b}$ it holds that
	\begin{equation}\label{eq_02_24}
		\mu_{\E}\br{\DeltaAint{a}{t_0}{X\br{s}\Deltas}}\leq\Deltaint{a}{t_0}{\mu_{\E}\br{X\br{s}}\Deltas}.
	\end{equation}
\end{lemma}
\begin{proof}
	Firstly, note that the integrals on the LHS of \eqref{eq_02_24} are well-defined, since $X\subseteq\Crd{\sbrt{a,b}}{\E}$. Moreover, with the use of the uniform rd-equicontinuity of $X$, together with \lemmaRef{lm_03}, it can be easily shown that $\mu_{\E}\br{X\br{\cdot}}$ is rd-continuous on $\sbrt{a,b}$. Thus, the integral on the RHS of \eqref{eq_02_24} is also well-defined. Finally, note that the equiboundedness of $X$ implies the equiboundedness of the family $\DeltaAint{a}{t_0}{X\br{s}\Deltas}$, which means that $\mu_{\E}\br{\DeltaAint{a}{t_0}{X\br{s}\Deltas}}$ is well-defined as well, for every $t_0\in\sbrt{a,b}$.
	
	Let $t_0\in\sbrt{a,b}$ and $\eps>0$ be arbitrary. Consider the corresponding partition $\cbr{\xi_i}_{i=1}^m$ from \lemmaRef{lm_02} such that the following (simplified) conclusion of \eqref{eq_02_13} holds
	\begin{equation}\label{eq_02_25}
		\br{\forall{u}\in{X}}\br{\forall{j}=1,\dots,m-1}\br{\forall{s}\in\lsrbrt{\xi_j,\xi_{j+1}}}\st\br{\norm{u\br{s}-u\br{\xi_j}}_{\E}<\eps}.
	\end{equation}
	Let $u\in{X}$ be arbitrary. It holds that
	\begin{equation}\label{eq_02_26}
		\begin{aligned}
			&\norm{\DeltaAint{a}{t_0}{u\br{s}\Deltas}-\sum_{j=1}^{m-1}\br{\Deltaint{\xi_j}{\xi_{j+1}}{\Deltas}\cdot{u}\br{\xi_j}}}_{\E}=\norm{\sum_{j=1}^{m-1}\DeltaAint{\xi_j}{\xi_{j+1}}{\br{u\br{s}-u\br{\xi_j}}\Deltas}}_{\E}\\
			&\qquad\leqText{\eqref{eq_02_25}\,\&\,\eqref{eq_02_03}}\sum_{j=1}^{m-1}\Deltaint{\xi_j}{\xi_{j+1}}{\eps\Deltas}=\br{t_0-a}\cdot\eps.
		\end{aligned}
	\end{equation}
	Since $u\in{X}$ was chosen as arbitrary, by adding a \enquote{clever zero}, it follows that
	\begin{equation}\label{eq_02_27}
		\begin{aligned}
			&\DeltaAint{a}{t_0}{X\br{s}\Deltas}\\
			&\qquad\subseteq\cbr{\br{\DeltaAint{a}{t_0}{u\br{s}\Deltas}-\sum_{j=1}^{m-1}\br{\Deltaint{\xi_j}{\xi_{j+1}}{\Deltas}\cdot{u}\br{\xi_j}}}\st{u}\in{X}}+\\
			&\qquad\qquad\qquad+\cbr{\br{\sum_{j=1}^{m-1}\br{\Deltaint{\xi_j}{\xi_{j+1}}{\Deltas}\cdot{u}\br{\xi_j}}}\st{u}\in{X}}\\
			&\qquad\subseteqText{\eqref{eq_02_26}}\br{t_0-a}\cdot\eps\cdot\ball{\E}{0_{\E}}{1}+\cbr{\br{\sum_{j=1}^{m-1}\br{\Deltaint{\xi_j}{\xi_{j+1}}{\Deltas}\cdot{u}\br{\xi_j}}}\st{u}\in{X}}\\
			&\qquad\subseteq\br{t_0-a}\cdot\eps\cdot\ball{\E}{0_{\E}}{1}+\sum_{j=1}^{m-1}\br{\Deltaint{\xi_j}{\xi_{j+1}}{\Deltas}\cdot\cbr{u\br{\xi_j}\st{u}\in{X}}},
		\end{aligned}
	\end{equation}
	where the last inclusion follows from the fact that for all $\lambda_1,\lambda_2\in\R$, and for all $\tau_1,\tau_2\in\sbrt{a,b}$, it holds that
	\begin{equation*}
		\cbr{\br{\lambda_1\cdot{u}\br{\tau_1}+\lambda_2\cdot{u}\br{\tau_2}}\st{u}\in{X}}\subseteq\cbr{\br{\lambda_1\cdot{u}\br{\tau_1}+\lambda_2\cdot{v}\br{\tau_2}}\st{u},v\in{X}}\subseteq\E.
	\end{equation*}
	Hence, by \definitionRef{def_06} and \axiomRef{def_05_ax_02} of \definitionRef{def_05}, \eqref{eq_02_27} implies that
	\begin{equation}\label{eq_02_28}
		\mu_{\E}\br{\DeltaAint{a}{t_0}{X\br{s}\Deltas}}\leq\br{t_0-a}\cdot\eps\cdot\mu_{\E}\br{\ball{\E}{0_{\E}}{1}}+\sum_{j=1}^{m-1}\br{\Deltaint{\xi_j}{\xi_{j+1}}{\Deltas}\cdot\mu_{\E}\br{X\br{\xi_j}}}.
	\end{equation}
	
	Note that the LHS of \eqref{eq_02_28} does not depend on $\eps$. Moreover, in the limit $\eps\to0^+$, the first term on the RHS of \eqref{eq_02_28} vanishes. Thus, in order to prove \eqref{eq_02_24}, it suffices to show that (recall that the partition $\cbr{\xi_i}_{i=1}^m$ depends on $\eps$)
	\begin{equation}\label{eq_02_29}
		\sum_{j=1}^{m-1}\br{\Deltaint{\xi_j}{\xi_{j+1}}{\Deltas}\cdot\mu_{\E}\br{X\br{\xi_j}}}\xrightarrow{\eps\to{0^+}}\Deltaint{a}{t_0}{\mu_{\E}\br{X\br{s}}\Deltas}.
	\end{equation}
	
	From \lemmaRef{lm_03}, it follows that for all $j=1,\dots,m-1$, and for all $s\in\lsrbrt{\xi_j,\xi_{j+1}}$, it holds that
	\begin{equation}\label{eq_02_30}
		\abs{\mu_{\E}\br{X\br{s}}-\mu_{\E}\br{X\br{\xi_j}}}\leqText{\lmRef{lm_03}}K_{\mu_{\E}}\cdot\sup_{u\in{X}}\norm{u\br{s}-u\br{\xi_j}}_{\E}\leqText{\eqref{eq_02_25}}K_{\mu_{\E}}\cdot\eps.
	\end{equation}
	Finally, using \eqref{eq_02_30}, by the very same arguments as in \eqref{eq_02_26}, we easily obtain \eqref{eq_02_29}.
\end{proof}

\begin{theorem}\label{thm_06}
	Let $\mu_{\E}$ be a measure of noncompactness in $\E$. Then the mapping $\mu_{\C{\sbrt{a,b}}{\E}}\st\fM_{\C{\sbrt{a,b}}{\E}}\to\lsrbr{0,\infty}$ defined as
	\begin{equation}\label{eq_02_31}
		\mu_{\C{\sbrt{a,b}}{\E}}\br{X}\defeq\lim_{\eps\to0^+}\sup_{u\in{X}}\,\sup_{s,t\in\sbrt{a,b}\,\st\abs{s-t}\leq\eps}\norm{u\br{s}-u\br{t}}_{\E}+\sup_{t\in\sbrt{a,b}}\mu_{\E}\br{X\br{t}},
	\end{equation}
	for $X\in\fM_{\C{\sbrt{a,b}}{\E}}$, is a measure of noncompactness in the space $\C{\sbrt{a,b}}{\E}$.
\end{theorem}
\begin{proof}
	Since $\mu_{\E}$ is a measure of noncompactness in $\E$, by \axiomRef{def_05_ax_01} of \definitionRef{def_05}, there exists $X\in\fM_{\E}$ such that $\mu_{\E}\br{X}=0$. Moreover, by \axiomRef{def_05_ax_02} of \definitionRef{def_05}, $\mu_{\E}\br{\cbr{x}}=0$, for every $x\in{X}$. Thus, apparently $\ker\br{\mu_{\C{\sbrt{a,b}}{\E}}}\neq\emptyset$, since for every $x\in{X}$ and the constant function $u\equiv{x}$ on $\sbrt{a,b}$, we have that $\mu_{\C{\sbrt{a,b}}{\E}}\br{\cbr{u}}=0$.

	Let $X\in\fM_{\C{\sbrt{a,b}}{\E}}$ be arbitrary such that $\mu_{\C{\sbrt{a,b}}{\E}}\br{X}=0$. In particular, \eqref{eq_02_31} implies that
	\begin{equation*}
		\lim_{\eps\to0^+}\sup_{u\in{X}}\,\sup_{s,t\in\sbrt{a,b}\,\st\abs{s-t}\leq\eps}\norm{u\br{s}-u\br{t}}_{\E}=0,
	\end{equation*}
	which readily implies that the family of functions $X$ is uniformly equicontinuous on $\sbrt{a,b}$. Furthermore, \eqref{eq_02_31} implies also that
	\begin{equation*}
		\sup_{t\in\sbrt{a,b}}\mu_{\E}\br{X\br{t}}=0,
	\end{equation*}
	which, according to \axiomRef{def_05_ax_01} of \definitionRef{def_05}, implies that $X\br{t}$ is relatively compact in $\E$, for every $t\in\sbrt{a,b}$. Thus, according to \theoremRef{thm_02}, the set $X$ is relatively compact in $\C{\sbrt{a,b}}{\E}$. Hence, $\ker\br{\mu_{\C{\sbrt{a,b}}{\E}}}\subseteq\fN_{\C{\sbrt{a,b}}{\E}}$, which means that \axiomRef{def_05_ax_01} of \definitionRef{def_05} is satisfied for $\mu_{\C{\sbrt{a,b}}{\E}}$.

	Using the fact that $\mu_{\E}$ is a measure of noncompactness in $\E$, it is rather easy to verify that \axiomRef{def_05_ax_02} of \definitionRef{def_05} is satisfied for $\mu_{\C{\sbrt{a,b}}{\E}}$.

	As for \axiomRef{def_05_ax_03} of \definitionRef{def_05}, let $X\in\fM_{\C{\sbrt{a,b}}{\E}}$ be arbitrary. Firstly, note that it can be easily shown that $\br{\cl{X}}\br{t}=\cl{X\br{t}}$, for all $t\in\sbrt{a,b}$. Indeed, let $t\in\sbrt{a,b}$ be arbitrary. Then, $x\in\br{\cl{X}}\br{t}$ means precisely that there exists $\cbr{u_k}_{k=1}^{\infty}\subseteq{X}$ such that $\lim_{k\to\infty}u_k\br{t}=x$. On the other hand, $x\in\cl{X\br{t}}$ means precisely that there exists $\cbr{x_k}_{k=1}^{\infty}\subseteq{X}\br{t}$ such that $\lim_{k\to\infty}x_k=x$. However, considering that $X\br{t}=\cbr{u\br{t}\st{u}\in{X}}$, it follows that $x_k\in{X}\br{t}$ if and only if there exists $u_k\in{X}$ such that $u_k\br{t}=x_k$, for $k=1,2,\dots$ Thus, $x\in\cl{X\br{t}}$ means precisely that there exists $\cbr{u_k}_{k=1}^{\infty}\subseteq{X}$ such that $\lim_{k\to\infty}u_k\br{t}=x$, as well. Hence, the claim $\br{\cl{X}}\br{t}=\cl{X\br{t}}$ is true. Using this fact, together with the fact that $\mu_{\E}$ is a measure of noncompactness in $\E$, it is obvious that in order to prove that
	\begin{equation*}
		\mu_{\C{\sbrt{a,b}}{\E}}\br{\cl{X}}=\mu_{\C{\sbrt{a,b}}{\E}}\br{X},
	\end{equation*}
	it suffices to show that
	\begin{equation}\label{eq_02_32}
		\lim_{\eps\to0^+}\sup_{u\in\cl{X}}\,\sup_{s,t\in\sbrt{a,b}\,\st\abs{s-t}\leq\eps}\norm{u\br{s}-u\br{t}}_{\E}\leq\lim_{\eps\to0^+}\sup_{u\in{X}}\,\sup_{s,t\in\sbrt{a,b}\,\st\abs{s-t}\leq\eps}\norm{u\br{s}-u\br{t}}_{\E},
	\end{equation}
	as the other direction from \eqref{eq_02_32} is trivial. Let $u_{\infty}\in\cl{X}$ be arbitrary. Then there exists $\cbr{u_k}_{k=1}^{\infty}\subseteq{X}$ such that $u_k\rightrightarrows{u}_{\infty}$. Let $s,t\in\sbrt{a,b}$ be arbitrary. By the continuity of $\norm{\cdot}_{\E}$, it immediately follows that
	\begin{equation}\label{eq_02_33}
		\norm{u_{\infty}\br{s}-u_{\infty}\br{t}}_{\E}=\lim_{k\to\infty}\norm{u_k\br{s}-u_k\br{t}}_{\E}\leqText{\cbr{u_k}_{k=1}^{\infty}\subseteq{X}}\sup_{u\in{X}}\norm{u\br{s}-u\br{t}}_{\E}.
	\end{equation}
	Since $u_{\infty}\in\cl{X}$ and $s,t\in\sbrt{a,b}$ were chosen as arbitrary, \eqref{eq_02_33} readily implies that \eqref{eq_02_32} holds. Hence, \axiomRef{def_05_ax_03} of \definitionRef{def_05} is satisfied for $\mu_{\C{\sbrt{a,b}}{\E}}$.

	As for \axiomRef{def_05_ax_04} of \definitionRef{def_05}, let $X\in\fM_{\C{\sbrt{a,b}}{\E}}$ be arbitrary. Firstly, note that by similar arguments as in the previous paragraph, it can be easily shown that $\br{\conv{X}}\br{t}=\conv{X\br{t}}$, for all $t\in\sbrt{a,b}$. Again, using this fact, together with the fact that $\mu_{\E}$ is a measure of noncompactness in $\E$, it is obvious that in order to prove that
	\begin{equation*}
		\mu_{\C{\sbrt{a,b}}{\E}}\br{\conv{X}}=\mu_{\C{\sbrt{a,b}}{\E}}\br{X},
	\end{equation*}
	it suffices to show that
	\begin{equation}\label{eq_02_34}
		\lim_{\eps\to0^+}\sup_{u\in\conv{X}}\,\sup_{s,t\in\sbrt{a,b}\,\st\abs{s-t}\leq\eps}\norm{u\br{s}-u\br{t}}_{\E}\leq\lim_{\eps\to0^+}\sup_{u\in{X}}\,\sup_{s,t\in\sbrt{a,b}\,\st\abs{s-t}\leq\eps}\norm{u\br{s}-u\br{t}}_{\E},
	\end{equation}
	as the other direction from \eqref{eq_02_34} is again trivial. Let $u_0\in\conv{X}$ be arbitrary. Then there exists $\lambda_1,\dots,\lambda_N>0$ and $\cbr{u_k}_{k=1}^{N}\subseteq{X}$ such that $\sum_{k=1}^N\lambda_k=1$ and $\sum_{k=1}^N\lambda_k{u}_k=u_0$. Let $s,t\in\sbrt{a,b}$ be arbitrary. Using the triangle inequality, it readily follows that
	\begin{equation}\label{eq_02_35}
		\begin{aligned}
			&\norm{u_0\br{s}-u_0\br{t}}_{\E}=\norm{\sum_{k=1}^N\lambda_k\cdot\br{u_k\br{s}-u_k\br{t}}}_{\E}\leq\sum_{k=1}^N\lambda_k\cdot\norm{u_k\br{s}-u_k\br{t}}_{\E}\\
			&\qquad\leqText{\cbr{u_k}_{k=1}^{N}\subseteq{X}}\br{\sum_{k=1}^N\lambda_k}\cdot\sup_{u\in{X}}\norm{u\br{s}-u\br{t}}_{\E}=\sup_{u\in{X}}\norm{u\br{s}-u\br{t}}_{\E}.
		\end{aligned}
	\end{equation}
	Since $u_0\in\conv{X}$ and $s,t\in\sbrt{a,b}$ were chosen as arbitrary, \eqref{eq_02_35} readily implies that \eqref{eq_02_34} holds. Hence, \axiomRef{def_05_ax_04} of \definitionRef{def_05} is satisfied for $\mu_{\C{\sbrt{a,b}}{\E}}$.

	Next, note that using the triangle inequality, together with the fact that $\mu_{\E}$ is a measure of noncompactness in $\E$, it is rather easy to verify that \axiomRef{def_05_ax_05} of \definitionRef{def_05} is satisfied for $\mu_{\C{\sbrt{a,b}}{\E}}$.

	Finally, as for \axiomRef{def_05_ax_06} of \definitionRef{def_05}, let $\cbr{X_k}_{k=1}^{\infty}\subseteq\fM_{\C{\sbrt{a,b}}{\E}}$ be a sequence of closed sets such that $X_{k+1}\subseteq{X}_k$, for all $k=1,2,\dots$, and that $\lim_{k\to\infty}\mu_{\C{\sbrt{a,b}}{\E}}\br{X_k}=0$. In particular, note that for every $t\in\sbrt{a,b}$, \eqref{eq_02_31} implies that
	\begin{equation}\label{eq_02_36}
		\lim_{k\to\infty}\mu_{\E}\br{X_k\br{t}}=0.
	\end{equation}

	Now, let $\cbr{u_k}_{k=1}^{\infty}\subseteq\C{\sbrt{a,b}}{\E}$ be arbitrary such that $u_k\in{X}_k$, for all $k=1,2,\dots$ In particular, since ${X}_1\supseteq{X}_2\supseteq\dots$, it holds that $\cbr{u_k\br{t},u_{k+1}\br{t},u_{k+2}\br{t},\dots}\subseteq{X}_k\br{t}$, for all $t\in\sbrt{a,b}$, for all $k=1,2,\dots$ Thus, utilising \axiomRef{def_05_ax_02} of \definitionRef{def_05}, for every $t\in\sbrt{a,b}$, it readily follows that
	\begin{equation*}
		\mu_{\E}\br{\cbr{u_k\br{t},u_{k+1}\br{t},u_{k+2}\br{t},\dots}}\leqText{\defRef{def_05}}\mu_{\E}\br{X_k\br{t}}\xrightarrow{k\to\infty}0,
	\end{equation*}
	where the last limit follows from \eqref{eq_02_36}. Hence, with respect to \remarkRef{rem_14}, $\cbr{u_k\br{t}}_{k=1}^{\infty}\subseteq\E$ contains a convergent subsequence (or equivalently, $\cbr{u_k}_{k=1}^{\infty}\subseteq\C{\sbrt{a,b}}{\E}$ contains a subsequence which is pointwise convergent at the point $t$), for all $t\in\sbrt{a,b}$. Let $\cbr{t_k}_{k=1}^{\infty}\subseteq\sbrt{a,b}$ be a sequence of real numbers dense in $\sbrt{a,b}$, and let $\cbr{u_k^1}_{k=1}^{\infty}$ be a subsequence of $\cbr{u_k}_{k=1}^{\infty}$ which is pointwise convergent at the point $t_1$. Inductively, for $l=1,2,\dots$, let $\cbr{u_k^{l+1}}_{k=1}^{\infty}\subseteq\cbr{u_k^l}_{k=1}^{\infty}$ be a subsequence of $\cbr{u_k^l}_{k=1}^{\infty}$ which is pointwise convergent at the point $t_{l+1}$. In particular, note that the sequence $\cbr{u_k^k}_{k=1}^{\infty}$ is pointwise convergent at all the points $\cbr{t_k}_{k=1}^{\infty}$. Further on, without loss of generality, we can assume that $\cbr{u_k}_{k=1}^{\infty}\subseteq\C{\sbrt{a,b}}{\E}$ is such a sequence, which is pointwise convergent at all the points $\cbr{t_k}_{k=1}^{\infty}$. Finally, define $u_{\infty}\st\cbr{t_k}_{k=1}^{\infty}\to\E$ as
	\begin{equation}\label{eq_02_37}
		u_{\infty}\br{t_j}\defeq\lim_{k\to\infty}u_k\br{t_j},\quad{j}=1,2,\dots,
	\end{equation}
	which according to the previous discussion is well-defined.

	Next, for $X\in\fM_{\C{\sbrt{a,b}}{\E}}$ and $\eps>0$, define (compare with the first term on the RHS of \eqref{eq_02_31})
	\begin{equation}\label{eq_02_38}
		\omega\br{X,\eps}\defeq\sup_{u\in{X}}\,\sup_{s,t\in\sbrt{a,b}\,\st\abs{s-t}\leq\eps}\norm{u\br{s}-u\br{t}}_{\E}.
	\end{equation}
	Let $k=1,2,\dots$ and $i,j=1,2,\dots$ be arbitrary. Then it follows that
	\begin{equation}\label{eq_02_39}
		\norm{u_k\br{t_i}-u_k\br{t_j}}_{\E}\leqText{\eqref{eq_02_38}}\omega\br{X_k,\abs{t_i-t_j}}.
	\end{equation}
	In the limit $k\to\infty$, and with respect to \eqref{eq_02_37}, \eqref{eq_02_39} yields that
	\begin{equation}\label{eq_02_40}
		\norm{u_{\infty}\br{t_i}-u_{\infty}\br{t_j}}_{\E}\leq\lim_{k\to\infty}\omega\br{X_k,\abs{t_i-t_j}}.
	\end{equation}
	Note that the RHS of \eqref{eq_02_40} is well-defined and finite, as the sequence $\cbr{\omega\br{X_k,\abs{t_i-t_j}}}_{k=1}^{\infty}$ is non-increasing (due to the fact that $X_1\supseteq{X}_2\supseteq\dots$) and bounded (due to the boundedness of $X_1$). Moreover, since $\lim_{k\to\infty}\mu_{\C{\sbrt{a,b}}{\E}}\br{X_k}=0$, it can be easily shown that
	\begin{equation}\label{eq_02_41}
		\lim_{\eps\to0^+}\lim_{k\to\infty}\omega\br{X_k,\eps}=0.
	\end{equation}
	In particular, combining \eqref{eq_02_40} and \eqref{eq_02_41}, it readily follows that $u_{\infty}\st\cbr{t_k}_{k=1}^{\infty}\to\E$ is uniformly continuous on the set $\cbr{t_k}_{k=1}^{\infty}$. Finally, since $\cbr{t_k}_{k=1}^{\infty}$ is dense in $\sbrt{a,b}$, $u_{\infty}$ can be extended (uniquely) to the whole interval $\sbrt{a,b}$ such that (compare with \eqref{eq_02_40})
	\begin{equation}\label{eq_02_42}
		\norm{u_{\infty}\br{s}-u_{\infty}\br{t}}_{\E}\leq\lim_{k\to\infty}\omega\br{X_k,\abs{s-t}},\quad{s},t\in\sbrt{a,b}.
	\end{equation}
	Again, note that \eqref{eq_02_42} and \eqref{eq_02_41} imply that such an extended $u_{\infty}\st\sbrt{a,b}\to\E$ is uniformly continuous on $\sbrt{a,b}$ (i.e. $u\in\C{\sbrt{a,b}}{\E}$).

	Now, note that it can be easily verified that in order to show that $\bigcap_{k=1}^{\infty}X_k\neq\emptyset$, it suffices to show that the extended $u_{\infty}$ is the uniform limit of $\cbr{u_k}_{k=1}^{\infty}$ (recall that $\cbr{u_k}_{k=1}^{\infty}$ is such that $u_k\in{X}_k$, for all $k=1,2,\dots$, and that $X_1\supseteq{X}_2\supseteq\dots$ are closed). Let $\eps>0$ be arbitrary. According to \eqref{eq_02_41}, there exists $\delta>0$ such that for all $0<h<\delta$, it holds that
	\begin{equation}\label{eq_02_43}
		\lim_{k\to\infty}\omega\br{X_k,h}<\eps.
	\end{equation}
	Moreover, let $t\in\sbrt{a,b}$ be arbitrary. Since $\cbr{t_k}_{k=1}^{\infty}$ is dense in $\sbrt{a,b}$, there exists $j\in\N$ such that $\abs{t-t_j}<\delta$. Then, for all $k=1,2,\dots$, it follows that
	\begin{equation}\label{eq_02_44}
		\begin{aligned}
			&\norm{u_k\br{t}-u_{\infty}\br{t}}_{\E}\leq\norm{u_k\br{t}-u_k\br{t_j}}_{\E}+\norm{u_k\br{t_j}-u_{\infty}\br{t_j}}_{\E}+\norm{u_{\infty}\br{t_j}-u_{\infty}\br{t}}_{\E}\\
			&\qquad\leqText{\eqref{eq_02_38}\,\&\,\eqref{eq_02_42}}\omega\br{X_k,\abs{t-t_j}}+\norm{u_k\br{t_j}-u_{\infty}\br{t_j}}_{\E}+\lim_{l\to\infty}\omega\br{X_l,\abs{t-t_j}}.
		\end{aligned}
	\end{equation}
	Thus, with respect to \eqref{eq_02_37}, applying the $\limsup_{k\to\infty}$ operator on both the sides of \eqref{eq_02_44}, we readily obtain that
	\begin{equation}\label{eq_02_45}
		\limsup_{k\to\infty}\norm{u_k\br{t}-u_{\infty}\br{t}}_{\E}\leq2\cdot\lim_{k\to\infty}\omega\br{X_k,\abs{t-t_j}}\lText{\abs{t-t_j}<\delta\,\&\,\eqref{eq_02_43}}2\eps.
	\end{equation}
	Finally, since $\eps>0$ and $t\in\sbrt{a,b}$ were chosen as arbitrary, and since $t\in\sbrt{a,b}$ is independent of the choice of $\delta>0$, we readily obtain from \eqref{eq_02_45} that $u_k\rightrightarrows{u}_{\infty}$ on $\sbrt{a,b}$. Thus, as discussed above at the beginning of this paragraph, this implies that $\bigcap_{k=1}^{\infty}X_k\neq\emptyset$ (i.e. \axiomRef{def_05_ax_06} of \definitionRef{def_05} is satisfied as well).
\end{proof}

\subsection{Kamke $\Delta$-function}\label{sec_02_05}
In this subsection we define the notion of a Kamke $\Delta$-function, which plays the role of a comparison condition in our main theorem. Note that this concept is new and was introduced by us.

\begin{definition}[\normalfont{Kamke $\Delta$-function}]\label{def_07}
	We say that $w\st\sbrt{a,b}\times\lsrbr{0,\infty}\to\lsrbr{0,\infty}$ is a \textit{Kamke $\Delta$-function} on $\sbrt{a,b}$, if the following conditions hold
	\begin{enumerate}[(i)]
		\item\label{def_07_ax_01} $w\br{\cdot,x}$ is rd-continuous on $\sbrt{a,b}$, for all $x\in\lsrbr{0,\infty}$.
		\item\label{def_07_ax_02} The family $\cbr{w\br{t,\cdot}}_{t\in\sbrt{a,b}}$ is uniformly equicontinuous on $\sbr{0,x}$, for all $x\in\br{0,\infty}$.
		\item\label{def_07_ax_03} $w\br{t,0}=0$, for all $t\in\sbrt{a,b}$.
		\item\label{def_07_ax_04} $u\equiv0$ is the unique non-negative continuous function satisfying, for all $t\in\sbrt{a,b}$, the following integral inequality
		\begin{equation}\label{eq_02_46}
			u\br{t}\leq\Deltaint{a}{t}{w\br{s,u\br{s}}\Deltas},\quad{t}\in\sbrt{a,b},
		\end{equation}
		for which $u\br{\cdot}$ is $\Delta$-differentiable at $a$ and $u^{\Delta}\br{a}=0$.		
	\end{enumerate}
\end{definition}

\begin{remark}
	Note that the integral in \eqref{eq_02_46} is well-defined. Indeed, let $u\in\C{\sbrt{a,b}}{\R}$ be an arbitrary non-negative function. Using the triangle inequality $\abs{w\br{s,u\br{s}}-w\br{t,u\br{t}}}\leq\abs{w\br{s,u\br{s}}-w\br{t,u\br{s}}}+\abs{w\br{t,u\br{s}}-w\br{t,u\br{t}}}$, for $s,t\in\sbrt{a,b}$, together with the continuity of $u$, and \axiomRef{def_07_ax_01} and \axiomRef{def_07_ax_02} of \definitionRef{def_07}, it can be easily shown that $w\br{\cdot,u\br{\cdot}}\in\Crd{\sbrt{a,b}}{\R}$. Thus, according to \remarkRef{rem_03}, the integral on the RHS of \eqref{eq_02_46} is well-defined.
\end{remark}

\begin{remark}\label{rem_17}
	Consider \axiomRef{def_07_ax_04} of \definitionRef{def_07}. Let $u\in\C{\sbrt{a,b}}{\R}$ be a non-negative function satisfying \eqref{eq_02_46}, for all $t\in\sbrt{a,b}$. From \eqref{eq_02_46}, it immediately follows that $u\br{a}=0$. Moreover
	\begin{enumerate}[a)]
		\item If $a$ is right-scattered, then $u\br{\cdot}$ is trivially $\Delta$-differentiable at $a$ (since it is continuous at the right-scattered point $a$). Thus, by \eqref{eq_02_01}, showing that $u\br{\cdot}$ is $\Delta$-differentiable at $a$ and that $u^{\Delta}\br{a}=0$, is equivalent to showing that $u\br{\sigma\br{a}}=0$.
		\item If $a$ is right-dense, then by \eqref{eq_02_02}, showing that $u\br{\cdot}$ is $\Delta$-differentiable at $a$ and that $u^{\Delta}\br{a}=0$, is equivalent to showing that the limit
		\begin{equation*}
			\lim_{t\to{a},t\in\T}\frac{u\br{t}}{t-a},
		\end{equation*}
		exists and is equal to $0$.
	\end{enumerate}
\end{remark}

\begin{remark}\label{rem_18}
	Due to \axiomRef{def_07_ax_03} of \definitionRef{def_07}, $u\equiv0$ always satisfies \eqref{eq_02_46}, for all $t\in\sbrt{a,b}$. Moreover, $u\equiv0$ trivially satisfies the additional requirement regarding $\Delta$-differentiability at $a$. The question is whether it is the only non-negative continuous function satisfying both of these. In particular, if $u\equiv0$ is the only non-negative solution of \eqref{eq_02_46} (regardless of whether or not the additional requirement regarding $\Delta$-differentiability at $a$ holds), \axiomRef{def_07_ax_04} of \definitionRef{def_07} is satisfied.
\end{remark}

\begin{remark}\label{rem_19}
	It can be shown that if $\tilde{w}$ is a Kamke $\Delta$-function on $\sbrt{a,b}$, then for every $c\in\sbrt{a,b}$, $w\defeq\tilde{w}\arrowvert_{\sbrt{a,\sigma\br{c}}\times\lsrbr{0,\infty}}$ is a Kamke $\Delta$-function on $\sbrt{a,\sigma\br{c}}$. Indeed, such a $w$ apparently satisfies \axiomRef{def_07_ax_02}, \axiomRef{def_07_ax_03}, and according to \remarkRef{rem_07}, also \axiomRef{def_07_ax_01} of \definitionRef{def_07}. Now, assume that $w$ does not satisfy \axiomRef{def_07_ax_04} of \definitionRef{def_07} (i.e. assume that there exists a non-negative $0\not\equiv{u}_0\in\C{\sbrt{a,\sigma\br{c}}}{\R}$ satisfying \eqref{eq_02_46}, for all $t\in\sbrt{a,\sigma\br{c}}$, such that $u_0$ is $\Delta$-differentiable at $a$ and $u_0^{\Delta}\br{a}=0$). Let $\tilde{u}_0$ be a continuous extension of $u_0$ to $\sbrt{a,b}$ defined as $\tilde{u}_0\br{t}\defeq{u}_0\br{t}$, for $t\in\sbrt{a,\sigma\br{c}}$, and $\tilde{u}_0\br{t}\defeq{u}_0\br{\sigma\br{c}}$, for $t\in\lbrsbrt{\sigma\br{c},b}$. By its construction, $\tilde{u}_0\not\equiv0$ is continuous and non-negative on $\sbrt{a,b}$, it satisfies \eqref{eq_02_46}, for all $t\in\sbrt{a,\sigma\br{c}}$, it is $\Delta$-differentiable at $a$ and $\tilde{u}_0^{\Delta}\br{a}=0$. Moreover, using the additivity of Cauchy $\Delta$-integral, together with the fact that $\tilde{w}$ is non-negative (by its definition), it is easy to see that $\tilde{u}_0\br{t}=u_0\br{\sigma\br{c}}\leq\Deltaint{a}{\sigma\br{c}}{w\br{s,u_0\br{s}}\Deltas}=\Deltaint{a}{\sigma\br{c}}{\tilde{w}\br{s,\tilde{u}_0\br{s}}\Deltas}\leq\Deltaint{a}{t}{\tilde{w}\br{s,\tilde{u}_0\br{s}}\Deltas}$, for all $t\in\lbrsbrt{\sigma\br{c},b}$. Thus, we obtain a contradiction with \axiomRef{def_07_ax_04} of \definitionRef{def_07} and the fact that $\tilde{w}$ is a Kamke $\Delta$-function on $\sbrt{a,b}$, which means that $\tilde{w}\arrowvert_{\sbrt{a,\sigma\br{c}}\times\lsrbr{0,\infty}}$ is a Kamke $\Delta$-function on $\sbrt{a,\sigma\br{c}}$.
\end{remark}

\subsection{Schauder fixed point theorem}
\begin{theorem}[\normalfont{Schauder fixed point theorem}]\label{thm_07}
	Let $X\subseteq\E$ be a non-empty, bounded, closed and convex set. Let $\fF\st{X}\to{X}$ be a continuous mapping such that $\fF\br{X}$ is relatively compact in $\E$. Then there exists a fixed point of $\fF$ in $X$.
\end{theorem}

\section{Main Result}\label{sec_03}
Now that we have all the preliminary results we need, we are ready to prove our main existence theorem, which is an extension of Theorem $13.3.1$ in \cite{Banas_Goebel} to the case of an arbitrary time scale. Also, note that the main ideas of the proof presented below were motivated by the proof of the above-mentioned theorem from \cite{Banas_Goebel}, and the proof of Theorem $14$ in \cite{Oberta} (which is a similar result for the case of fractional differential equations in Banach spaces).

\begin{theorem}\label{thm_08}
	Let $\E$ be a real Banach space, $\sbrt{a,b}$ a time scale interval and $u_0\in\E$. Let $f\st\sbrt{a,b}\times\ball{\E}{u_0}{\beta}\to\E$, for some $\beta>0$. Let $\mu_{\E}$ be a sublinear measure of noncompactness in $\E$, and $w$ be a Kamke $\Delta$-function on $\sbrt{a,b}$. Assume that
	\begin{enumerate}[(i)]
		\item\label{thm_08_assum_01} The family $\cbr{f\br{\cdot,x}}_{x\in\ball{\E}{u_0}{\beta}}$ is uniformly rd-equicontinuous on $\sbrt{a,b}$.
		\item\label{thm_08_assum_02} The family $\cbr{f\br{t,\cdot}}_{t\in\sbrt{a,b}}$ is uniformly equicontinuous on $\ball{\E}{u_0}{\beta}$.
		\item\label{thm_08_assum_03} $\norm{f\br{t,x}}_{\E}\leq{M}$, for all $\br{t,x}\in\sbrt{a,b}\times\ball{\E}{u_0}{\beta}$, for some $M>0$.
		\item\label{thm_08_assum_04} $\mu_{\E}\br{f\br{t,X}}\leq{w}\br{t,\mu_{\E}\br{X}}$, for all $t\in\sbrt{a,b}$, for all $\emptyset\neq{X}\subseteq\ball{\E}{u_0}{\beta}$.
		\item\label{thm_08_assum_05} $\mu_{\E}\br{\cbr{u_0}}=0$.
	\end{enumerate}
	Then the IVP
	\begin{subequations}\label{eq_03_01}
		\begin{align}
			u^{\Delta}\br{t}&=f\br{t,u\br{t}},\quad{t}\in\sbrtk{a,b},\\
			u\br{a}&=u_0,\label{eq_03_01_02}
		\end{align}
	\end{subequations}
	has a local solution on $\sbrt{a,\sigma\br{\bast}}$, where $\bast\in\sbrt{a,b}$ is defined as
	\begin{equation}\label{eq_03_02}
		\bast\defeq\sup\cbr{s\in\T\st\sigma\br{s}\leq\min\cbr{b,a+\frac{\beta}{M}}}.
	\end{equation}
\end{theorem}
\begin{proof}
	Firstly we provide a brief outline of the main steps of the proof
	\begin{enumerate}[1)]
		\item We create a nested sequence $\cbr{X_k}_{k=0}^{\infty}$ of non-empty, bounded, closed, convex and uniformly equicontinuous subsets of $\C{\sbrt{a,\sigma\br{\bast}}}{\E}$.
		\item We show that the sequence $\cbr{\mu_{\E}\br{X_k\br{\cdot}}}_{k=0}^{\infty}$ (in the space $\C{\sbrt{a,\sigma\br{\bast}}}{\R}$) is uniformly equicontinuous and equibounded on $\sbrt{a,\sigma\br{\bast}}$. Using the Arzelà-Ascoli theorem, we show that this sequence converges uniformly to some $v_{\infty}\in\C{\sbrt{a,\sigma\br{\bast}}}{\R}$.
		\item We verify that $v_{\infty}$ satisfies given integral inequality from \axiomRef{def_07_ax_04} of \definitionRef{def_07}, as well as the additional requirement regarding $\Delta$-differentiability at $a$. Thus, we show that $v_{\infty}\equiv0$ on $\sbrt{a,\sigma\br{\bast}}$ (i.e. $\mu_{\E}\br{X_k\br{\cdot}}\rightrightarrows0$ on $\sbrt{a,\sigma\br{\bast}}$).
		\item We show, using the axioms of \definitionRef{def_05} (for the measure of noncompactness in $\C{\sbrt{a,\sigma\br{\bast}}}{\E}$, as defined in \theoremRef{thm_06}), that the set $X_{\infty}\defeq\bigcap_{k=0}^{\infty}X_k$ is non-empty and relatively compact.
		\item We show, using the Schauder fixed point theorem, that the corresponding Volterra-type integral equation \eqref{eq_02_18} has a solution in $X_{\infty}$, which according to \theoremRef{thm_05} is a local solution of \eqref{eq_03_01}.
	\end{enumerate}
	Next, we provide details for these steps. Firstly, note that \eqref{eq_03_02} implies that
	\begin{equation}\label{eq_03_03}
		M\cdot\br{\sigma\br{\bast}-a}\leq\beta.
	\end{equation}
	
	Define $X_0$ as
	\begin{equation}\label{eq_03_04}
		\begin{aligned}
			X_0\defeq\{u\st&\,u\in\C{\sbrt{a,\sigma\br{\bast}}}{\E};\;\norm{u\br{\cdot}-u_0}_{\infty}\leq\beta;\;u\br{a}=u_0;\\
			&\norm{u\br{s}-u\br{t}}_{\E}\leq{M}\cdot\abs{s-t},\forall{s},t\in\sbrt{a,\sigma\br{\bast}}\}.
		\end{aligned}
	\end{equation}
	It is rather easy to verify that the family $X_0$ is non-empty, bounded, closed, convex and uniformly equicontinuous on $\sbrt{a,\sigma\br{\bast}}$. Moreover, according to \remarkRef{rem_08}, \assumptionRef{thm_08_assum_01} implies that the family $\cbr{f\br{\cdot,x}}_{x\in\ball{\E}{u_0}{\beta}}$ is uniformly rd-equicontinuous on $\sbrt{a,\sigma\br{\bast}}$. Using this fact and \assumptionRef{thm_08_assum_02} (whilst keeping in mind the uniform (rd-)equicontinuity in both of these), together with the uniform equicontinuity of $X_0$ on $\sbrt{a,\sigma\br{\bast}}$, and the triangle inequality \eqref{eq_02_19}, one can verify that the family $\cbr{f\br{\cdot,u\br{\cdot}}}_{u\in{X}_0}$ is uniformly rd-equicontinuous on $\sbrt{a,\sigma\br{\bast}}$.
	
	Define the operator $\fF\st{X}_0\to\C{\sbrt{a,\sigma\br{\bast}}}{\E}$ as the RHS of \eqref{eq_02_18}, i.e.
	\begin{equation}\label{eq_03_05}
		\br{\fF\br{u}}\br{t}\defeq{u}_0+\DeltaAint{a}{t}{f\br{s,u\br{s}}\Deltas},\quad{t}\in\sbrt{a,\sigma\br{\bast}},
	\end{equation}
	for $u\in{X}_0$.
	
	As discussed above, $f\br{\cdot,u\br{\cdot}}$ is rd-continuous on $\sbrt{a,\sigma\br{\bast}}$, for every $u\in{X}_0$. This implies that $\DeltaAint{a}{\cdot}{f\br{s,u\br{s}}\Deltas}$ is well-defined and continuous on $\sbrt{a,\sigma\br{\bast}}$. Thus, the definition of $\fF$ is correct (i.e. $\fF$ maps $X_0$ into $\C{\sbrt{a,\sigma\br{\bast}}}{\E}$). Moreover, according to \theoremRef{thm_05}, it is enough to show that $\fF$ has a fixed point.
	
	Now, we verify that $\fF\br{X_0}\subseteq{X}_0$. Let $u\in{X}_0$ be arbitrary. Apparently, \eqref{eq_03_05} implies that $\br{\fF\br{u}}\br{a}=u_0$, and as discussed above, $\br{\fF\br{u}}\br{\cdot}$ is continuous on $\sbrt{a,\sigma\br{\bast}}$. Moreover, it holds that
	\begin{equation*}
		\begin{aligned}
			&\norm{\fF\br{u}-u_0}_{\infty}=\sup_{t\in\sbrt{a,\sigma\br{\bast}}}\norm{\br{\fF\br{u}}\br{t}-u_0}_{\E}\eqText{\eqref{eq_03_05}}\sup_{t\in\sbrt{a,\sigma\br{\bast}}}\norm{\DeltaAint{a}{t}{f\br{s,u\br{s}}\Deltas}}_{\E}\\
			&\qquad\leqText{\assumRef{thm_08_assum_03}\,\&\,\eqref{eq_02_03}}\sup_{t\in\sbrt{a,\sigma\br{\bast}}}\Deltaint{a}{t}{M\Deltas}=M\cdot\br{\sigma\br{\bast}-a}\leqText{\eqref{eq_03_03}}\beta.
		\end{aligned}
	\end{equation*}
	Finally, let $s,t\in\sbrt{a,\sigma\br{\bast}}$ be arbitrary such that $s\leq{t}$. It holds that
	\begin{equation*}
		\begin{aligned}
			&\norm{\br{\fF\br{u}}\br{s}-\br{\fF\br{u}}\br{t}}_{\E}\eqText{\eqref{eq_03_05}}\norm{\DeltaAint{s}{t}{f\br{\tau,u\br{\tau}}\Deltatau}}_{\E}\\
			&\qquad\leqText{\assumRef{thm_08_assum_03}\,\&\,\eqref{eq_02_03}}\Deltaint{s}{t}{M\Deltatau}=M\cdot\abs{s-t}.
		\end{aligned}
	\end{equation*}
	Hence, $\fF\br{X_0}\subseteq{X}_0$.
	
	Next, define $\cbr{X_k}_{k=1}^{\infty}$ as
	\begin{equation}\label{eq_03_06}
		X_{k+1}\defeq\cl{\conv{\fF\br{X_k}}},\quad{k}=0,1,\dots
	\end{equation}
	Since $X_0$ is closed and convex, and $\fF\br{X_0}\subseteq{X}_0$, \eqref{eq_03_06} implies that $\fF\br{X_0}\subseteq{X}_1\subseteq{X}_0$. In particular, this implies that $\fF\br{X_1}\subseteq{X}_1$. Considering that $X_1$ is closed and convex (by its definition), \eqref{eq_03_06} implies that $\fF\br{X_1}\subseteq{X_2}\subseteq{X}_1$. Inductively, we obtain that
	\begin{equation}\label{eq_03_07}
		X_{k+1}\subseteq{X}_k,\quad{k}=0,1,\dots
	\end{equation}
	Note that in the view of the uniform equicontinuity of $X_0$, \eqref{eq_03_07} implies the uniform equicontinuity of $X_k$ on $\sbrt{a,\sigma\br{\bast}}$, for every $k=0,1,\dots$
	
	Define $v_k\st\sbrt{a,\sigma\br{\bast}}\to\lsrbr{0,\infty}$ as
	\begin{equation}\label{eq_03_08}
		v_k\br{\cdot}\defeq\mu_{\E}\br{X_k\br{\cdot}},\quad{k}=0,1,\dots
	\end{equation}
	Let $k=0,1,\dots$ and $s,t\in\sbrt{a,\sigma\br{\bast}}$ be arbitrary. Using \lemmaRef{lm_03}, we readily obtain that
	\begin{equation*}
		\abs{v_k\br{s}-v_k\br{t}}\leqText{\lmRef{lm_03}}{K}_{\mu_{\E}}\cdot\sup_{u\in{X}_k}\norm{u\br{s}-u\br{t}}_{\E}\leqText{X_k\subseteq{X}_0}{K}_{\mu_{\E}}\cdot\sup_{u\in{X}_0}\norm{u\br{s}-u\br{t}}_{\E},
	\end{equation*}
	which in the view of the independence of the RHS on $k$, and the uniform equicontinuity of $X_0$ (in the space $\C{\sbrt{a,\sigma\br{\bast}}}{\E}$), implies the uniform equicontinuity of $\cbr{v_k}_{k=0}^{\infty}$ (in the space $\C{\sbrt{a,\sigma\br{\bast}}}{\R}$). Moreover, from \eqref{eq_03_07} and \axiomRef{def_05_ax_02} of \definitionRef{def_05}, it follows that
	\begin{equation}\label{eq_03_09}
		0\leq{v}_{k+1}\br{t}\leq{v}_k\br{t}.
	\end{equation}
	Furthermore, since $v_0$ is continuous, it is bounded on $\sbrt{a,\sigma\br{\bast}}$. Thus, \eqref{eq_03_09} implies the equiboundedness of $\cbr{v_k}_{k=0}^{\infty}$ on $\sbrt{a,\sigma\br{\bast}}$, which means that the set $\cbr{v_k\br{t}}_{k=0}^{\infty}\subseteq\R$ is relatively compact in $\R$, for every $t\in\sbrt{a,\sigma\br{\bast}}$ (recall that  $J\subseteq\R$ is relatively compact if and only if it is bounded). Hence, according to \theoremRef{thm_02}, $\cbr{v_k}_{k=0}^{\infty}$ is relatively compact in $\C{\sbrt{a,\sigma\br{\bast}}}{\R}$, which (by one of the equivalent characteristics of relative compactness) implies that it contains a uniformly convergent subsequence. By \eqref{eq_03_09}, it follows that the whole sequence is uniformly convergent, i.e. there exists $v_{\infty}\in\C{\sbrt{a,\sigma\br{\bast}}}{\R}$ such that
	\begin{equation}\label{eq_03_10}
		v_k\rightrightarrows{v}_{\infty}\geqText{\eqref{eq_03_09}}0.
	\end{equation}
	
	Now, using \axiomRef{def_07_ax_04} of \definitionRef{def_07}, we show that $v_{\infty}\equiv0$. Let $k=0,1,\dots$ and $t\in\sbrt{a,\sigma\br{\bast}}$ be arbitrary. Firstly, note that it is rather easy to verify that (it was actually shown as a part of the proof of \theoremRef{thm_06})
	\begin{equation}\label{eq_03_11}
		\br{\cl{\conv{\fF\br{X_k}}}}\br{t}=\cl{\conv{\br{\fF\br{X_k}}\br{t}}}.
	\end{equation}
	Utilising \axiomRef{def_05_ax_03} and \axiomRef{def_05_ax_04} of \definitionRef{def_05}, we readily obtain that
	\begin{equation}\label{eq_03_12}
		\begin{aligned}
			&v_{k+1}\br{t}\eqText{\eqref{eq_03_08}\,\&\,\eqref{eq_03_06}}\mu_{\E}\br{\br{\cl{\conv{\fF\br{X_k}}}}\br{t}}\eqText{\eqref{eq_03_11}\,\&\,\defRef{def_05}}\mu_{\E}\br{\br{\fF\br{X_k}}\br{t}}\\
			&\qquad\eqText{\eqref{eq_03_05}}\mu_{\E}\br{\cbr{u_0}+\DeltaAint{a}{t}{f\br{s,X_k\br{s}}\Deltas}}\\
			&\qquad\leqText{\defRef{def_06}\,\&\,\assumRef{thm_08_assum_05}}\mu_{\E}\br{\DeltaAint{a}{t}{f\br{s,X_k\br{s}}\Deltas}}\leqText{\lmRef{lm_04}}\Deltaint{a}{t}{\mu_{\E}\br{f\br{s,X_k\br{s}}}\Deltas}\\
			&\qquad\leqText{\assumRef{thm_08_assum_04}\,\&\,\eqref{eq_02_03}}\Deltaint{a}{t}{w\br{s,\mu_{\E}\br{X_k\br{s}}}\Deltas}\eqText{\eqref{eq_03_08}}\Deltaint{a}{t}{w\br{s,v_k\br{s}}\Deltas}.
		\end{aligned}
	\end{equation}
	Using the fact that $v_k\rightrightarrows{v}_{\infty}$ on $\sbrt{a,\sigma\br{\bast}}$, \axiomRef{def_07_ax_02} of \definitionRef{def_07}, and the equiboundedness of $\cbr{v_k}_{k=0}^{\infty}$, it is easy to verify that $w\br{\cdot,v_k\br{\cdot}}\rightrightarrows{w}\br{\cdot,v_{\infty}\br{\cdot}}$ on $\sbrt{a,\sigma\br{\bast}}$. Thus, it holds that
	\begin{equation}\label{eq_03_13}
		\begin{aligned}
			&\abs{\Deltaint{a}{t}{\br{w\br{s,v_k\br{s}}-w\br{s,v_{\infty}\br{s}}}\Deltas}}\\
			&\qquad\leqText{\eqref{eq_02_03}}\sup_{s\in\sbrt{a,\sigma\br{\bast}}}\abs{w\br{s,v_k\br{s}}-w\br{s,v_{\infty}\br{s}}}\cdot\Deltaint{a}{t}{\Deltas}\xrightarrow{k\to\infty}0,
		\end{aligned}
	\end{equation}
	where the last limit follows from the above-mentioned uniform convergence $w\br{\cdot,v_k\br{\cdot}}\rightrightarrows{w}\br{\cdot,v_{\infty}\br{\cdot}}$ on $\sbrt{a,\sigma\br{\bast}}$. Hence, taking the pointwise limit in \eqref{eq_03_12}, \eqref{eq_03_13} implies that $v_{\infty}$ satisfies the integral inequality from \eqref{eq_02_46}, i.e.
	\begin{equation}\label{eq_03_14}
		v_{\infty}\br{t}\leq\Deltaint{a}{t}{w\br{s,v_{\infty}\br{s}}\Deltas}.
	\end{equation}
	It remains to verify that $v_{\infty}$ satisfies also the additional requirement from \axiomRef{def_07_ax_04} of \definitionRef{def_07} (i.e. that $v_{\infty}\br{\cdot}$ is $\Delta$-differentiable at $a$ and that $v_{\infty}^{\Delta}\br{a}=0$).
	
	If $a$ is right-scattered, according to \remarkRef{rem_17} and \eqref{eq_03_09}, it is enough to verify that $v_1\br{\sigma\br{a}}=0$. Using the same arguments as in \eqref{eq_03_12}, it follows that
	\begin{equation*}
		\begin{aligned}
			&v_1\br{\sigma\br{a}}\leq\Deltaint{a}{\sigma\br{a}}{w\br{s,\mu_{\E}\br{X_0\br{s}}}\Deltas}\eqText{\eqref{eq_02_04}}\br{\sigma\br{a}-a}\cdot{w}\br{a,\mu_{\E}\br{X_0\br{a}}}\\
			&\qquad\eqText{\eqref{eq_03_04}}\br{\sigma\br{a}-a}\cdot{w}\br{a,\mu_{\E}\br{\cbr{u_0}}}\eqText{\assumRef{thm_08_assum_05}}\br{\sigma\br{a}-a}\cdot{w}\br{a,0}\eqText{\axRef{def_07_ax_03}\textrm{ of }\defRef{def_07}}0.
		\end{aligned}
	\end{equation*}
	
	Similarly as in the previous case, if $a$ is right-dense, according to \remarkRef{rem_17} and \eqref{eq_03_09}, it is enough to verify that $\lim_{t\to{a},t\in\T}\frac{v_1\br{t}}{t-a}$ exists and is equal to $0$. Define $y\br{\cdot}$ as
	\begin{equation}\label{eq_03_15}
		y\br{t}\defeq\br{t-a}\cdot{f}\br{a,u_0}+u_0,\quad{t}\in\sbrt{a,\sigma\br{\bast}}.
	\end{equation}
	Let $x\in\fF\br{{X}_0}$ be arbitrary (i.e. $x\br{\cdot}=\br{\fF\br{u}}\br{\cdot}$, for some $u\in{X}_0$). Firstly, note that it is easy to show that for all $X\subseteq\E$, it holds that $\sup_{z\in\cl{\conv{X}}}\norm{z}_{\E}=\sup_{z\in{X}}\norm{z}_{\E}$ (note that a similar relation was actually shown as a part of the proof of \theoremRef{thm_06}; see also \eqref{eq_02_33} and \eqref{eq_02_35}). Now, let $t\in\sbrt{a,\sigma\br{\bast}}$ be arbitrary. Keeping in mind the previous claim, it follows that
	\begin{equation}\label{eq_03_16}
		\begin{aligned}
			&\norm{y\br{t}-x\br{t}}_{\E}\eqText{\eqref{eq_03_05}\,\&\,\eqref{eq_03_15}}\norm{\br{t-a}\cdot{f}\br{a,u_0}-\DeltaAint{a}{t}{f\br{s,u\br{s}}\Deltas}}_{\E}\\
			&\qquad\leqText{\thmRef{thm_01}}\sup_{z\in\cl{\conv{\cbr{f\br{s,u\br{s}}\,\st{s}\in\sbrt{a,t}}}}}\norm{\br{t-a}\cdot{f}\br{a,u_0}-\br{t-a}\cdot{z}}_{\E}\\
			&\qquad=\br{t-a}\cdot\sup_{s\in\sbrt{a,t}}\norm{f\br{a,u_0}-f\br{s,u\br{s}}}_{\E}\leq\br{t-a}\cdot\gamma\br{t},
		\end{aligned}
	\end{equation}
	where $\gamma\br{\cdot}$ is defined as
	\begin{equation}\label{eq_03_17}
		\gamma\br{t}\defeq\sup_{v\in{X}_0}\sup_{s\in\sbrt{a,t}}\norm{f\br{a,u_0}-f\br{s,v\br{s}}}_{\E},\quad{t}\in\sbrt{a,\sigma\br{\bast}}.
	\end{equation}
	Notice that \eqref{eq_03_17}, and thus also the RHS of \eqref{eq_03_16}, is independent of $x\in\fF\br{{X}_0}$. Moreover, as \eqref{eq_03_16} holds for every $x\in\fF\br{{X}_0}$, using the sublinearity and continuity of $\norm{\cdot}_{\E}$, it is easy to show that \eqref{eq_03_16} holds for every $x\in\cl{\conv{\fF\br{X_0}}}=X_1$ as well. Hence, it follows that
	\begin{equation}\label{eq_03_18}
		X_1\br{t}\subseteq\ball{\E}{y\br{t}}{\br{t-a}\cdot\gamma\br{t}}=\cbr{y\br{t}}+\br{t-a}\cdot\gamma\br{t}\cdot\ball{\E}{0_{\E}}{1}.
	\end{equation}
	By \axiomRef{def_05_ax_02} of \definitionRef{def_05}, we readily obtain that
	\begin{equation}\label{eq_03_19}
		\begin{aligned}
			&v_1\br{t}\eqText{\eqref{eq_03_08}}\mu_{\E}\br{X_1\br{t}}\leqText{\eqref{eq_03_18}\,\&\,\defRef{def_05}\,\&\,\defRef{def_06}}\mu_{\E}\br{\cbr{y\br{t}}}+\br{t-a}\cdot\gamma\br{t}\cdot\mu_{\E}\br{\ball{\E}{0_{\E}}{1}}\\
			&\qquad=\br{t-a}\cdot\gamma\br{t}\cdot\mu_{\E}\br{\ball{\E}{0_{\E}}{1}},
		\end{aligned}
	\end{equation}
	where the last equality follows from
	\begin{equation*}
		\begin{aligned}
			&\mu_{\E}\br{\cbr{y\br{t}}}\leqText{\eqref{eq_03_15}\,\&\,\defRef{def_06}\,\&\,\assumRef{thm_08_assum_05}}\br{t-a}\cdot\mu_{\E}\br{\cbr{f\br{a,u_0}}}\\
			&\qquad\leqText{\assumRef{thm_08_assum_04}}\br{t-a}\cdot{w}\br{a,\mu_{\E}\br{\cbr{u_0}}}\eqText{\assumRef{thm_08_assum_05}}\br{t-a}\cdot{w}\br{a,0}\eqText{\axRef{def_07_ax_03}\textrm{ of }\defRef{def_07}}0.
		\end{aligned}
	\end{equation*}
	Thus, according to \eqref{eq_03_19}, in order to show that $\lim_{t\to{a},t\in\T}\frac{v_1\br{t}}{t-a}=0$, it suffices to show that $\lim_{t\to{a},t\in\T}\gamma\br{t}=0$. Since $v\br{a}=u_0$, for all $v\in{X}_0$, $a$ is right-dense, and $f\br{\cdot,X_0\br{\cdot}}$ is uniformly rd-equicontinuous on $\sbrt{a,\sigma\br{\bast}}$, \definitionRef{def_04} implies that
	\begin{equation}\label{eq_03_20}
		\br{\forall\eps>0}\br{\exists\delta>0}\br{\forall{v}\in{X}_0}\br{\forall{s}\in\lsrbr{a,a+\delta}\cap\T}\st\br{\norm{f\br{a,{u_0}}-f\br{s,v\br{s}}}_{\E}<\eps}.
	\end{equation}
	Keeping in mind \eqref{eq_03_17}, \eqref{eq_03_20} implies that $\lim_{t\to{a},t\in\T}\gamma\br{t}=0$.

	To conclude, we have shown that $v_{\infty}\br{\cdot}$ is $\Delta$-differentiable at $a$ and that $v_{\infty}^{\Delta}\br{a}=0$. Moreover, with respect to \remarkRef{rem_19}, since $v_{\infty}$ is non-negative, continuous and it satisfies \eqref{eq_03_14}, for all $t\in\sbrt{a,\sigma\br{\bast}}$, \axiomRef{def_07_ax_04} of \definitionRef{def_07} implies that $v_{\infty}\equiv0$ on $\sbrt{a,\sigma\br{\bast}}$.

	Recall that $\cbr{X_k}_{k=0}^{\infty}$ is a sequence of non-empty, bounded, closed, convex and uniformly equicontinuous subsets of $\C{\sbrt{a,\sigma\br{\bast}}}{\E}$. Moreover, it satisfies \eqref{eq_03_07}, and by \eqref{eq_03_10}, it also holds that $\mu_{\E}\br{X_k\br{\cdot}}\rightrightarrows0$ on $\sbrt{a,\sigma\br{\bast}}$ (recall that $v_{\infty}\equiv0$ on $\sbrt{a,\sigma\br{\bast}}$).

	Now, define $X_{\infty}$ as
	\begin{equation}\label{eq_03_21}
		X_{\infty}\defeq\bigcap_{k=0}^{\infty}{X}_k.
	\end{equation}
	Obviously, $X_{\infty}$ is bounded. Moreover, $X_{\infty}$ is closed and convex, since the intersection of an arbitrary number of closed and convex sets is closed and convex.
	
	Let $\mu_{\C{\sbrt{a,\sigma\br{\bast}}}{\E}}$ be the measure of noncompactness (in the space $\C{\sbrt{a,\sigma\br{\bast}}}{\E}$) from \theoremRef{thm_06}. Let $k=0,1,\dots$ be arbitrary. From \eqref{eq_02_31}, it follows that
	\begin{equation}\label{eq_03_22}
		\mu_{\C{\sbrt{a,\sigma\br{\bast}}}{\E}}\br{X_k}=\lim_{\eps\to0^+}\sup_{u\in{X}_k}\,\sup_{s,t\in\sbrt{a,\sigma\br{\bast}}\,\st\abs{s-t}\leq\eps}\norm{u\br{s}-u\br{t}}_{\E}+\sup_{t\in\sbrt{a,\sigma\br{\bast}}}\mu_{\E}\br{X_k\br{t}}.
	\end{equation}
	Since $X_k$ is uniformly equicontinuous on $\sbrt{a,\sigma\br{\bast}}$, the first term on the RHS of \eqref{eq_03_22} vanishes. Moreover, since $\mu_{\E}\br{X_k\br{\cdot}}\rightrightarrows0$ on $\sbrt{a,\sigma\br{\bast}}$, \eqref{eq_03_22} yields that
	\begin{equation*}
		\mu_{\C{\sbrt{a,\sigma\br{\bast}}}{\E}}\br{X_k}\xrightarrow{k\to\infty}0.
	\end{equation*}
	Thus, in the view of \remarkRef{rem_13}, $X_{\infty}$ is non-empty and relatively compact (in the space $\C{\sbrt{a,\sigma\br{\bast}}}{\E}$).

	Since $\fF\br{X_k}\subseteq{X}_k$ (see \eqref{eq_03_06} and \eqref{eq_03_07}), for all $k=0,1,\dots$, \eqref{eq_03_21} implies that $\fF\br{X_{\infty}}\subseteq{X}_{\infty}$. Furthermore, since $X_{\infty}$ is relatively compact, $\fF\br{X_{\infty}}$ is relatively compact as well.
	
	Next, we show that the mapping $\fF\st{X}_{\infty}\to{X}_{\infty}$ is continuous. Let $\cbr{u_k}_{k=1}^{\infty}\subseteq{X}_{\infty}$ be arbitrary such that $u_k\rightrightarrows{u}_{\infty}$ on $\sbrt{a,\sigma\br{\bast}}$, for some $u_{\infty}\in{X}_{\infty}$. By the Heine criterion, it is enough to show that $\fF\br{u_k}\rightrightarrows\fF\br{u_{\infty}}$ on $\sbrt{a,\sigma\br{\bast}}$. Let $\eps>0$ be arbitrary. Using \assumptionRef{thm_08_assum_02} and the fact that $u_k\rightrightarrows{u}_{\infty}$ on $\sbrt{a,\sigma\br{\bast}}$, it is easy to show that there exists $n_0\in\N$ such that
	\begin{equation}\label{eq_03_23}
		\br{\forall{k}\geq{n}_0}\br{\forall{t}\in\sbrt{a,\sigma\br{\bast}}}\st\br{\norm{f\br{t,u_k\br{t}}-f\br{t,u_{\infty}\br{t}}}_{\E}<\eps}.
	\end{equation}
	Let $k\geq{n}_0$ be arbitrary. It holds that
	\begin{equation*}
		\begin{aligned}
			&\norm{\fF\br{u_k}-\fF\br{u_{\infty}}}_{\infty}=\sup_{t\in\sbrt{a,\sigma\br{\bast}}}\norm{\br{\fF\br{u_k}}\br{t}-\br{\fF\br{u_{\infty}}}\br{t}}_{\E}\\
			&\qquad\eqText{\eqref{eq_03_05}}\sup_{t\in\sbrt{a,\sigma\br{\bast}}}\norm{\DeltaAint{a}{t}{\br{f\br{s,u_k\br{s}}-f\br{s,u_{\infty}\br{s}}}\Deltas}}_{\E}\leqText{\eqref{eq_03_23}\,\&\,\eqref{eq_02_03}}\eps\cdot\br{\sigma\br{\bast}-a}.
		\end{aligned}
	\end{equation*}
	Since $\eps>0$ was chosen as arbitrary, this immediately implies that $\fF\br{u_k}\rightrightarrows\fF\br{u_{\infty}}$ on $\sbrt{a,\sigma\br{\bast}}$.
	
	Finally, according to \theoremRef{thm_07}, $\fF$ has a fixed point in $X_{\infty}$, which by \theoremRef{thm_05} implies that \eqref{eq_03_01} has a local solution on $\sbrt{a,\sigma\br{\bast}}$.
\end{proof}

\section{Applications}\label{sec_04}
In this section we apply \theoremRef{thm_08} on a countable system of dynamic equations on time scales, which arises from semi-discretisation of parabolic partial dynamic equations. In particular, we prove that under certain assumptions (specified in \exampleRef{ex_05}), solution of such a countable system exists in the space $c_0$.

\subsection{Preliminaries}
Firstly, we remind some examples of Banach spaces, which will be used in this section. Also, we provide particular examples of a measure of noncompactness in $c_0$ and a Kamke $\Delta$-function, respectively

We denote by $\bx=\br{x_k}_{k=1}^{\infty}$ a real sequence. Note that the space $c_0\defeq\cbr{\bx\st\lim_{k\to\infty}x_k=0}$ is a Banach space when equipped with the norm $\norm{\bx}_{c_0}\defeq\sup_{k=1,2,\dots}\abs{x_k}$. Also, recall that the space $\BC{\lsrbr{0,\infty}}{\R}\defeq\cbr{u\st\lsrbr{0,\infty}\to\R\st{u}\text{ is bounded and continuous on }\lsrbr{0,\infty}}$ is a Banach space when equipped with the norm $\norm{u}_{\BC{\lsrbr{0,\infty}}{\R}}\defeq\sup_{x\in\lsrbr{0,\infty}}\abs{u\br{x}}$.

\begin{theorem}\label{thm_09}
	The mapping $\chi_{c_0}\st\fM_{c_0}\to\lsrbr{0,\infty}$ defined as
	\begin{equation}\label{eq_04_01}
		\chi_{c_0}\br{X}\defeq\lim_{k\to\infty}\sup_{\bx\in{X}}\sup_{j\geq{k}}\abs{x_j},
	\end{equation}
	for $X\in\fM_{c_0}$, is a sublinear measure of noncompactness (the so called Hausdorff measure of noncompactness) in the space $c_0$. Moreover, for all $\bx\in{c}_0$ it holds that
	\begin{equation}\label{eq_04_02}
		\chi_{c_0}\br{\cbr{\bx}}=0.
	\end{equation}
\end{theorem}
\begin{proof}
	The theorem follows from Theorem $5.18$, Lemma $5.9$ and Lemma $5.10$ in \cite{Banas_Mursaleen}.
\end{proof}

\begin{theorem}\label{thm_10}
	Let $q\in\Crd{\sbrt{a,b}}{\R}$ be non-negative on $\sbrt{a,b}$. Then
	\begin{equation*}
		w\br{t,x}\defeq{q}\br{t}\cdot{x},\quad{t}\in\sbrt{a,b},x\in\lsrbr{0,\infty},
	\end{equation*}
	is a Kamke $\Delta$-function on $\sbrt{a,b}$.
\end{theorem}
\begin{proof}
	\axiomRef{def_07_ax_01} and \axiomRef{def_07_ax_03} of \definitionRef{def_07} are obviously satisfied. Moreover, since $q$ is bounded on $\sbrt{a,b}$ (as it is rd-continuous on $\sbrt{a,b}$), it is easy to verify that \axiomRef{def_07_ax_02} of \definitionRef{def_07} holds. Finally, according to Theorem $6.4$ in \cite{Bohner_Dynamic}, for every $u\in\C{\sbrt{a,b}}{\R}$ satisfying, for all $t\in\sbrt{a,b}$, the following integral inequality
	\begin{equation*}
		u\br{t}\leq\Deltaint{a}{t}{q\br{s}\cdot{u}\br{s}\Deltas},
	\end{equation*}
	it holds that $u\br{t}\leq0$, for all $t\in\sbrt{a,b}$. Thus, $u\equiv0$ is the unique non-negative continuous solution of \eqref{eq_02_46} on $\sbrt{a,b}$, which according to \remarkRef{rem_18} means that \axiomRef{def_07_ax_04} of \definitionRef{def_07} holds as well.
\end{proof}

\subsection{Dynamic equations on time scales in the space $c_0$}
Firstly, we rewrite the IVP \eqref{eq_03_01} in the form of a dynamic equation in a Banach sequence space. Consider the following IVP
\begin{subequations}\label{eq_04_03}
	\begin{align}
		\bu^{\Delta}\br{t}&=\bff\br{t,\bu\br{t}},\quad{t}\in\sbrtk{a,b},\\
		\bu\br{a}&=\bvarphi,
	\end{align}
\end{subequations}
where $\bu\br{\cdot}\defeq\br{u_k\br{\cdot}}_{k=1}^{\infty}$, $\mathbf{f}\br{\cdot,\bx}\defeq\br{f_k\br{\cdot,\bx}}_{k=1}^{\infty}$ and $\bvarphi\defeq\br{\varphi_k}_{k=1}^{\infty}$.

\begin{theorem}\label{thm_11}
	Consider the IVP \eqref{eq_04_03} in the Banach space $\E=c_0$. Let $\bvarphi\in{c}_0$ and $\bff\st\sbrt{a,b}\times\ball{c_0}{\bvarphi}{\beta}\to{c}_0$, for some $\beta>0$. Let $p_j\in\Crd{\sbrt{a,b}}{\R}$ be non-negative, for $j=1,2,\dots$ Let $\br{n_j}_{j=1}^{\infty}\subseteq\N$ be a non-decreasing sequence such that $\lim_{j\to\infty}n_j=\infty$. Assume that
	\begin{enumerate}[(i)]
		\item\label{thm_11_assum_01} The family $\cbr{\bff\br{\cdot,\bx}}_{\bx\in\ball{c_0}{\bvarphi}{\beta}}$ is uniformly rd-equicontinuous on $\sbrt{a,b}$.
		\item\label{thm_11_assum_02} The family $\cbr{\bff\br{t,\cdot}}_{t\in\sbrt{a,b}}$ is uniformly equicontinuous on $\ball{c_0}{\bvarphi}{\beta}$.
		\item\label{thm_11_assum_03} $\abs{f_j\br{t,\bx}}\leq{p}_j\br{t}+Q\sup_{i\geq{n}_j}\abs{x_i}$, for all $\br{t,\bx}\in\sbrt{a,b}\times\ball{c_0}{\bvarphi}{\beta}$, for all $j=1,2,\dots$, for some $Q>0$.
		\item\label{thm_11_assum_04} $\lim_{j\to\infty}p_j\br{t}=0$, for all $t\in\sbrt{a,b}$.
		\item\label{thm_11_assum_05} $p_j\br{t}\leq{P}$, for all $t\in\sbrt{a,b}$, for all $j=1,2,\dots$, for some $P>0$.
	\end{enumerate}
	Then the IVP \eqref{eq_04_03} has a local solution on $\sbrt{a,\sigma\br{\bast}}$, where $\bast\in\sbrt{a,b}$ is defined as in \eqref{eq_03_02} and $M$ is given by
	\begin{equation}\label{eq_04_04}
		M\defeq{P}+Q\cdot\br{\norm{\bvarphi}_{c_0}+\beta}.
	\end{equation}
\end{theorem}
\begin{proof}
	Obviously, \assumptionRef{thm_08_assum_01} and \assumptionRef{thm_08_assum_02} of \theoremRef{thm_08} are satisfied. Moreover, \assumptionRef{thm_08_assum_03} of \theoremRef{thm_08} is a direct consequence of \eqref{eq_04_04}, \assumptionRef{thm_11_assum_03} and \assumptionRef{thm_11_assum_05} of \theoremRef{thm_11}, and the fact that $\bx\in\ball{c_0}{\bvarphi}{\beta}$.
	
	Consider the Hausdorff measure of noncompactness in the space $c_0$ from \theoremRef{thm_09}. \assumptionRef{thm_08_assum_05} of \theoremRef{thm_08} is a direct consequence of \eqref{eq_04_02}.
	
	Finally, let $w$ be a Kamke $\Delta$-function on $\sbrt{a,b}$ defined as
	\begin{equation}\label{eq_04_05}
		w\br{t,x}\defeq{Q}\cdot{x},\quad{t}\in\sbrt{a,b},x\in\lsrbr{0,\infty}.
	\end{equation}
	Note that according to \theoremRef{thm_10}, such a $w$ is indeed a Kamke $\Delta$-function on $\sbrt{a,b}$. Let $t\in\sbrt{a,b}$ and $\emptyset\neq{X}\subseteq\ball{c_0}{\bvarphi}{\beta}$ be arbitrary. It holds that
	\begin{equation*}
		\begin{aligned}
			&\chi_{c_0}\br{\bff\br{t,X}}\eqText{\eqref{eq_04_01}}\lim_{k\to\infty}\sup_{\bx\in{X}}\sup_{j\geq{k}}\abs{f_j\br{t,\bx}}\leqText{\assumRef{thm_11_assum_03}}\lim_{k\to\infty}\sup_{\bx\in{X}}\sup_{j\geq{k}}\br{p_j\br{t}+Q\cdot\sup_{i\geq{n}_j}\abs{x_i}}\\
			&\qquad\leq\lim_{k\to\infty}\sup_{j\geq{k}}p_j\br{t}+Q\cdot\lim_{k\to\infty}\sup_{\bx\in{X}}\sup_{j\geq{k}}\sup_{i\geq{n}_j}\abs{x_i}\eqText{\assumRef{thm_11_assum_04}}Q\cdot\lim_{k\to\infty}\sup_{\bx\in{X}}\sup_{j\geq{k}}\sup_{i\geq{n}_j}\abs{x_i}\\
			&\qquad\eqText{\lim_{j\to\infty}n_j=\infty}Q\cdot\lim_{k\to\infty}\sup_{\bx\in{X}}\sup_{j\geq{k}}\abs{x_j}\eqText{\eqref{eq_04_01}}Q\cdot\chi_{c_0}\br{X}\eqText{\eqref{eq_04_05}}w\br{t,\chi_{c_0}\br{X}}.
		\end{aligned}
	\end{equation*}
	Thus, \assumptionRef{thm_08_assum_04} of \theoremRef{thm_08} is satisfied.
	
	Hence, according to \theoremRef{thm_08}, \eqref{eq_04_03} has a local solution on $\sbrt{a,\sigma\br{\bast}}$.
\end{proof}

\begin{remark}
	Note that instead of the non-negative functions $p_j\in\Crd{\sbrt{a,b}}{\R}$, for $j=1,2,\dots$ in \theoremRef{thm_11}, it suffices to assume a single non-negative abstract function $\bp=\br{p_j}_{j=1}^{\infty}\in\Crd{\sbrt{a,b}}{c_0}$ satisfying \assumptionRef{thm_11_assum_03} of \theoremRef{thm_11}. In such a case, \assumptionRef{thm_11_assum_04} and \assumptionRef{thm_11_assum_05} of \theoremRef{thm_11} would be satisfied automatically.
\end{remark}

\subsection{Semi-discretisation of parabolic partial dynamic equations on time scales}\label{sec_04_03}
Let $u\st\T\times\R\to\R$ and $\br{t_0,x_0}\in\Tk\times\R$. We define the partial $\Delta$-derivative $\pDelta{u}{t}$ of $u$ with respect to the first variable at the point $\br{t_0,x_0}$ as $\pDelta{u}{t}\br{t_0,x_0}\defeq\br{u\br{\cdot,x_0}}^{\Delta}\br{t_0}$. Analogously, the classical partial derivative $\frac{\partial{u}}{\partial{x}}$ of $u$ with respect to the second variable is defined.

Let $T>0$. Let $F\st\sbrt{0,T}\times\lsrbr{0,\infty}\to\R$, $\psi\st\sbrt{0,T}\to\R$ and $\varphi\st\lsrbr{0,\infty}\to\R$. Consider the following parabolic partial dynamic equation, together with a boundary and an initial condition, respectively
\begin{subequations}\label{eq_04_06}
	\begin{align}
		\pDelta{u}{t}u\br{t,x}&=\diffp[2]{u}{x}\br{t,x}+F\br{t,x},\quad{t}\in\sbrtk{0,T},x\in\lsrbr{0,\infty},\label{eq_04_06_01}\\
		u\br{t,0}&=\psi\br{t},\quad{t}\in\sbrt{0,T},\\
		u\br{0,x}&=\varphi\br{x},\quad{x}\in\lsrbr{0,\infty}.
	\end{align}
\end{subequations}

Now, we semi-discretise \eqref{eq_04_06} in the second variable. For simplicity, we consider unit step. Define $u_n\br{\cdot}\defeq{u}\br{\cdot,n}$, $F_n\br{\cdot}\defeq{F}\br{\cdot,n}$ and $\varphi_n\defeq\varphi\br{n}$, for $n=0,1,\dots$

For $t\in\sbrt{0,T}$ and $n=1,2,\dots$, consider the central difference approximation of the first term on the RHS of \eqref{eq_04_06_01} at the point $\br{t,n}$
\begin{equation}\label{eq_04_07}
	\diffp[2]{u}{x}\br{t,n}\approx{u}_{n+1}\br{t}-2u_n\br{t}+u_{n-1}\br{t}\eqdef\Lambda_n\br{t,\br{u_k\br{t}}_{k=1}^{\infty}}.
\end{equation}
Substituting \eqref{eq_04_07} into \eqref{eq_04_06_01}, we obtain the following approximation of \eqref{eq_04_06}
\begin{subequations}\label{eq_04_08}
	\begin{align}
		u_n^{\Delta}\br{t}&=\Lambda_n\br{t,\br{u_k\br{t}}_{k=1}^{\infty}}+F_n\br{t},\quad{t}\in\sbrtk{0,T},n=1,2,\dots,\\
		u_0\br{t}&=\psi\br{t},\quad{t}\in\sbrt{0,T},\label{eq_04_08_02}\\
		u_n\br{0}&=\varphi_n,\quad{n}=1,2,\dots
	\end{align}
\end{subequations}
Now, \eqref{eq_04_08} can be rewritten as \eqref{eq_04_03}. In particular, consider the following IVP
\begin{subequations}\label{eq_04_09}
	\begin{align}
		\bu^{\Delta}\br{t}&=\br{\Lambda_k\br{t,\bu\br{t}}+F_k\br{t}}_{k=1}^{\infty}\eqdef\bff\br{t,\bu\br{t}},\quad{t}\in\sbrtk{0,T},\label{eq_04_09_01}\\
		\bu\br{0}&=\br{\varphi_k}_{k=1}^{\infty}\eqdef\bvarphi.
	\end{align}
\end{subequations}

Note that the term $u_0\br{\cdot}$, which occurs in $\Lambda_1\br{\cdot,\br{u_k\br{\cdot}}_{k=1}^{\infty}}$ (see \eqref{eq_04_07}), is not part of $\br{u_k\br{\cdot}}_{k=1}^{\infty}$. Rather, the term $u_0\br{\cdot}$ is given by \eqref{eq_04_08_02}, i.e.
\begin{equation}\label{eq_04_10}
	\Lambda_1\br{\cdot,\bx}=x_2-2x_1+\psi\br{\cdot}.
\end{equation}
Also, note that in such a setting, $\Lambda_1$ is the only \enquote{$\Lambda_k$} depending explicitly on $t$ (since according to \eqref{eq_04_07}, $\Lambda_k\br{\cdot,\bx}=x_{k+1}-2x_k+x_{k-1}$, for all $k=2,3,\dots$).

\begin{example}\label{ex_05}
	Consider the IVP \eqref{eq_04_09}. Assume that
	\begin{enumerate}[(i)]
		\item\label{ex_05_assum_01} The mapping $g\st{t}\mapsto{F}\br{t,\cdot}$, for $t\in\sbrt{0,T}$ (i.e. $\br{g\br{t}}\br{x}\defeq{F}\br{t,x}$, for $t\in\sbrt{0,T}$, for $x\in\lsrbr{0,\infty}$) is such that $g\in\Crd{\sbrt{0,T}}{\BC{\lsrbr{0,\infty}}{\R}}$.
		\item\label{ex_05_assum_02} $\lim_{x\to\infty}F\br{t,x}=0$, for all $t\in\sbrt{0,T}$.
		\item\label{ex_05_assum_03} $\varphi\br{\cdot}$ is continuous on $\lsrbr{0,\infty}$, such that $\lim_{x\to\infty}\varphi\br{x}=0$.
		\item\label{ex_05_assum_04} $\psi\equiv0$ on $\sbrt{0,T}$.
	\end{enumerate}
	Utilising \theoremRef{thm_11}, we will show that under these assumptions, the IVP \eqref{eq_04_09} has a local solution in the space $c_0$. Also, note that \assumptionRef{ex_05_assum_04} is only for convenience and it could be easily extended to the case of $\psi\in\C{\sbrt{0,T}}{\R}$.
	
	Firstly, note that \assumptionRef{ex_05_assum_02} implies that $\bff\br{t,\bx}\in{c}_0$ (see also \eqref{eq_04_09_01} and \eqref{eq_04_07}), for all $\br{t,\bx}\in\sbrt{0,T}\times{c}_0$. Similarly, \assumptionRef{ex_05_assum_03} implies that $\bvarphi\in{c}_0$. Thus, the problem is well-formulated. Moreover, according to \eqref{eq_04_10}, \assumptionRef{ex_05_assum_04} implies that $\Lambda_1\br{\cdot,\bx}=x_2-2x_1$, for all $\bx\in{c}_0$.
	
	Let $s,t\in\sbrt{0,T}$ and $\bx\in{c}_0$ be arbitrary. Then the following estimate holds
	\begin{equation*}
		\begin{aligned}
			&\norm{\bff\br{s,\bx}-\bff\br{t,\bx}}_{c_0}\eqText{\eqref{eq_04_09_01}}\sup_{k=1,2,\dots}\abs{\Lambda_k\br{s,\bx}-\Lambda_k\br{t,\bx}+F_k\br{s}-F_k\br{t}}\\
			&\qquad\eqText{\eqref{eq_04_07}}\sup_{k=1,2,\dots}\abs{F_k\br{s}-F_k\br{t}}\leq\sup_{x\in\lsrbr{0,\infty}}\abs{F\br{s,x}-F\br{t,x}}\\
			&\qquad\eqText{\assumRef{ex_05_assum_01}}\norm{g\br{s}-g\br{t}}_{\BC{\lsrbr{0,\infty}}{\R}}.
		\end{aligned}
	\end{equation*}
	Taking into account \lemmaRef{lm_01} and \assumptionRef{ex_05_assum_01} of \exampleRef{ex_05}, it is easy to see that \assumptionRef{thm_11_assum_01} of \theoremRef{thm_11} is satisfied (see also \definitionRef{def_04}).
	
	Now, let $t\in\sbrt{0,T}$ and $\bx,\by\in{c}_0$ be arbitrary. Then the following estimate holds
	\begin{equation*}
		\norm{\bff\br{t,\bx}-\bff\br{t,\by}}_{c_0}\eqText{\eqref{eq_04_09_01}}\sup_{k=1,2,\dots}\abs{\Lambda_k\br{t,\bx}-\Lambda_k\br{t,\by}}\leqText{\eqref{eq_04_07}}4\cdot\sup_{k=1,2,\dots}\abs{x_k-y_k}=4\cdot\norm{\bx-\by}_{c_0},
	\end{equation*}
	which immediately implies that \assumptionRef{thm_11_assum_02} of \theoremRef{thm_11} is satisfied as well.
	
	Finally, define $p_j\br{\cdot}\defeq\abs{F_j\br{\cdot}}=\abs{F\br{\cdot,j}}$, for $j=1,2,\dots$ Using \assumptionRef{ex_05_assum_01} of \exampleRef{ex_05}, it is easy to verify that $p_j\in\Crd{\sbrt{0,T}}{\R}$, for all $j=1,2,\dots$ Furthermore, defining $n_1\defeq1$, $n_j\defeq{j}-1$, for $j=2,3,\dots$, and $Q\defeq4$, we immediately obtain that \assumptionRef{thm_11_assum_03} of \theoremRef{thm_11} is satisfied. Finally, \assumptionRef{thm_11_assum_04} of \theoremRef{thm_11} is an immediate consequence of \assumptionRef{ex_05_assum_02} of \exampleRef{ex_05}, whereas \assumptionRef{thm_11_assum_05} of \theoremRef{thm_11} follows from the fact that $g$ (as defined in \assumptionRef{ex_05_assum_01} of \exampleRef{ex_05}) is bounded on $\sbrt{0,T}$ (since it is rd-continuous on $\sbrt{0,T}$).
	
	Thus, according to \theoremRef{thm_11}, \eqref{eq_04_09} has a local solution in the space $c_0$.
\end{example}

\begin{remark}
	In \exampleRef{ex_05}, the Banach space $\E=c_0$ was chosen, as it has known closed form for the Hausdorff measure of noncompactness (see \eqref{eq_04_01}), which is suitable for applications. Other spaces that could be considered are e.g. $c$ or $\ell^p$, for which such closed forms for the Hausdorff measure of noncompactness are also known and are described in \cite{Banas_Mursaleen}. Also, the space $\ell^{\infty}$ might be considered, as it has a known form of a sublinear (although non-Hausdorff) measure of noncompactness as well (see \cite{Banas_Mursaleen}). However, in order to keep the example concise, $c_0$ was chosen among these spaces.
\end{remark}

\begin{remark}
	The space domain in \eqref{eq_04_06} can be also considered to be the whole real line, instead of $\lsrbr{0,\infty}$. In such a case, semi-discretisation of \eqref{eq_04_06} would lead to a dynamic equation in a Banach space of biinfinite sequences on $\Z$. However, this would require to develop the corresponding theory of measures of noncompactness for such spaces (although, we expect that the generalisation of the theory from sequence spaces to the case of biinfinite sequence spaces on $\Z$ should be straightforward). Thus, for the sake of simplicity, we work only with the \enquote{usual} sequence spaces (and thus we restrict the space variable in \eqref{eq_04_06} to the half-infinite interval $\lsrbr{0,\infty}$). Finally, note that countable systems of dynamic equations in such biinfinite sequence spaces (mainly $\ell^{\infty}\br{\Z}$) are studied in \cite{Slavik_Stehlik_2015, Slavik_Stehlik_2014, Slavik_Stehlik_Volek}.
\end{remark}

\section{Concluding remarks}\label{sec_05}
Our main result, \theoremRef{thm_08}, generalises Theorem $13.3.1$ in \cite{Banas_Goebel}. Indeed, if we consider $\T=\R$, $u_0=0_{\E}$ and $f$ to be uniformly continuous, we obtain the result from \cite{Banas_Goebel} as a direct consequence of \theoremRef{thm_08}. Moreover, compared to \cite{Banas_Goebel}, we require an additional assumption, namely \assumptionRef{thm_08_assum_05} in \theoremRef{thm_08}. This is due to the fact that in \cite{Banas_Goebel}, it is assumed that $u_0=0_{\E}$, and as one can easily verify directly from \definitionRef{def_06}, for every sublinear measure of noncompactness it holds that $\mu_{\E}\br{\cbr{0_{\E}}}=0$. However, note that \assumptionRef{thm_08_assum_05} in \theoremRef{thm_08} is not redundant, as there exists a sublinear measure of noncompactness that does not satisfy this assumption. As an example, consider the mapping $\mu_{\E}\st{X}\mapsto\sup_{x\in{X}}\norm{x}_{\E}$, for $X\in\fM_{\E}$. According to Section $3.1$ in \cite{Banas_Goebel}, such a $\mu_{\E}$ is a sublinear measure of noncompactness, and it is rather easy to see that $\mu_{\E}\br{X}=0$ if and only if $X=\cbr{0_{\E}}$.

As mentioned at the very beginning, the requirements on $f$ in \theoremRef{thm_08} are of a different type when compared to the other existence results found in the literature. Also, the use of a Kamke $\Delta$-function in the comparison condition of our main theorem is completely new, and was firstly used in this paper. The use of such a general comparison condition, together with an arbitrary (axiomatically defined) sublinear measure of noncompactness, puts \theoremRef{thm_08} into quite a general setting.

Even though we worked with the Cauchy $\Delta$-integral, the whole theory can be also developed in the framework of the more general Riemann $\Delta$-integral. Indeed, notice that throughout this paper, we use the Cauchy $\Delta$-integral only for rd-continuous functions, for which it is well-known that these two integrals coincide (however, as the simpler Cauchy $\Delta$-integral was sufficient, we decided to work with that one). Thus, the whole theory presented in this paper, including \theoremRef{thm_08}, can be stated in the context of Riemann $\Delta$-integral as well. Hence, \theoremRef{thm_08} indeed extends the result from \cite{Banas_Goebel} (which uses Riemann integration for the case of $\T=\R$).

For convenience, we consider the initial condition \eqref{eq_03_01_02} only at the left-most point $a\in\sbrt{a,b}$. The extension of \theoremRef{thm_08} to the case of an arbitrary starting point $t_0\in\sbrt{a,b}$ should be straightforward, yet one needs to treat such a case carefully. In particular, in order to guarantee the existence of a solution of \eqref{eq_03_01}, with the initial condition \eqref{eq_03_01_02} being at some $t_0\in\brt{a,b}$, on some non-trivial interval $\sbrt{\aast,t_0}$ to the left of $t_0$ (note that the existence of a solution on some non-trivial interval $\sbrt{t_0,\bast}$ to the right of $t_0$ would be guaranteed by \theoremRef{thm_08}), the assumptions of \theoremRef{thm_08} should be sufficient for the case of $t_0$ being left-dense. However, in the case of $t_0$ being left-scattered, some assumption of \textit{regressivity} of $f$ would be needed (see also Definition $8.15$ and Theorem $8.18$ in \cite{Bohner_Dynamic}). This is in contrast to the case of $t_0$ being right-scattered, where the existence of a solution to the right of $t_0$ (on some non-trivial interval) is guaranteed trivially.

\theoremRef{thm_01} is a well-known result which is proved e.g. in \cite{Cichon_Kubiaczyk_SikorskaNowak_Yantir_2012}. However, as we use it in a slightly different setting and provide a different proof, we decided to prove it here as well.

In \lemmaRef{lm_01} we provide an equivalent \enquote{$\eps\text{-}\delta$ definition} for rd-continuity, which we did not find in the existing literature. This equivalent definition is then utilised when defining the uniform rd-equicontinuity (see \definitionRef{def_04}), which was introduced by us as a new concept as well. Moreover, note that the uniform rd-equicontinuity generalises the notion of the uniform equicontinuity. In particular, the relationship between a family of functions being uniformly rd-equicontinuous and the individual functions being rd-continuous, is precisely the same as the relationship between a family of functions being uniformly equicontinuous and the individual functions being continuous. Finally, note that a condition similar to the one provided in \lemmaRef{lm_01} was presented for the so called \textit{regulated} functions at the beginning of the proof of Theorem $5.21$ in \cite{Bohner_Advances}.

In \remarkRef{rem_06} we discuss why the restriction of an rd-continuous function to some time scale subinterval is not necessarily rd-continuous. In \remarkRef{rem_09} we discuss what implications this might have on the relationship between local solutions of the IVP \eqref{eq_02_17} and solutions of the corresponding Volterra-type integral equation \eqref{eq_02_18}. Both these remarks are supported by counterexamples (see \exampleRef{ex_02} and \exampleRef{ex_03}, respectively). Although this is more a technical thing which can be easily overcome (see \remarkRef{rem_07}), we did not find such a discussion in the existing literature.

In \theoremRef{thm_03} we prove the equivalence between local solutions of the IVP \eqref{eq_02_17} and solutions of the corresponding Volterra-type integral equation \eqref{eq_02_18}. \theoremRef{thm_04} and \theoremRef{thm_05} are direct consequences of this result. Such an equivalence is well-known in the literature, although in a different setting (see e.g. \cite{Cichon_Kubiaczyk_SikorskaNowak_Yantir_2012, Kubiaczyk_SikorskaNowak}). Thus, we present and prove it ourselves in order to stress the technical discrepancies described in the previous paragraph, and to show how they can be overcome.

\lemmaRef{lm_03} is based on Lemma $13.2.1$ in \cite{Banas_Goebel}, where it is stated in a slightly different form. For completeness, we provide the proof ourselves. However, note that the proof we present mimics the steps of the proof of the above-mentioned result from \cite{Banas_Goebel}.

\lemmaRef{lm_04} provides a new result in the context of time scales and the theory of measures of noncompactness. The proof was inspired by the proof of Lemma $3$ in \cite{Oberta} and that of Lemma $13.2.2$ in \cite{Banas_Goebel}. Finally, note that a similar relation as in \lemmaRef{lm_04}, yet in a different setting (e.g. notice the use of the newly introduced notion of uniform rd-equicontinuity), in the context of time scales was also shown e.g. in \cite{Kubiaczyk_SikorskaNowak, SikorskaNowak}.

\theoremRef{thm_06} is an extension of Theorem $11.2$ in \cite{Banas_Goebel} to the case of an arbitrary time scale. However, note that in \cite{Banas_Goebel}, the proof of the first $5$ axioms of \definitionRef{def_05} is skipped. Thus, the proof presented by us supplements also the case of $\T=\R$ proved in \cite{Banas_Goebel}, as we provide the proofs for these axioms as well. Also, note that in the proof of \theoremRef{thm_06}, verification of \axiomRef{def_05_ax_06} of \definitionRef{def_05} follows precisely the above-mentioned proof from \cite{Banas_Goebel}.

The notion of a Kamke $\Delta$-function (see \definitionRef{def_07}) is a new concept in the theory of time scales. It extends the notion of a Kamke function presented in Definition $3.1$ in \cite{Li} and Section $13.3$ in \cite{Banas_Goebel}, to the case of an arbitrary time scale. In particular, note that for $\T=\R$, Kamke $\Delta$-function reduces to these two above mentioned notions of Kamke function. In \theoremRef{thm_10} we provide a particular example of a Kamke $\Delta$-function, which for $\T=\R$ reduces to the well-known example of a Kamke function usually found in the literature. However, it might be an interesting question to find more sophisticated examples of Kamke $\Delta$-functions, e.g. one which is not of the form $q\br{t}\cdot{h}\br{x}$, or one for which $u\equiv0$ is not the unique non-negative continuous solution of \eqref{eq_02_46} (and thus the additional requirement regarding $\Delta$-differentiability at $a$ from \axiomRef{def_07_ax_04} of \definitionRef{def_07} would need to be employed).

\theoremRef{thm_11} was inspired by Section $3$ in \cite{Banas_Lecko} and Theorem $17$ in \cite{Oberta}, where similar systems of classical differential equations and fractional differential equations, respectively, are studied. Also, note that \sectionRef{sec_04_03} was inspired by Section $4.3$ in \cite{Oberta}, where a similar fractional parabolic PDE (for $\T=\R$) is investigated. Moreover, note that a more general non-linearity, e.g. in the form of a $p$-Laplacian, could be considered in \exampleRef{ex_05} in the same way as it is done in \cite{Oberta}. Also, note that partial dynamic equations of the type \eqref{eq_04_06}, although with a more general non-linear dependence on $u$ in the form of $F\br{t,x,u\br{t,x}}$, could be studied in the future. Finally, note that the existence and uniqueness theorems for reaction-diffusion equations, together with their dependence on the \textit{graininess} function, involving such a general non-linear dependence on $u$ are studied e.g. in \cite{Slavik_Stehlik_Volek}.



\small{
\vspace{5mm}
\noindent\textbf{Acknowledgements}\hspace{3mm}The author would like to thank prof. Pavel Řehák for his support, guidance and for his insightful remarks and suggestions. Also, the author would like to thank the two anonymous reviewers for their helpful and constructive comments.

\vspace{2mm}
\noindent\textbf{Funding}\hspace{3mm}The research has been supported by the Brno University of Technology grant number FSI-S-23-8161.

\vspace{2mm}
\noindent\textbf{Conflict of interest}\hspace{3mm}The author declares that he has no conflict of interest.

\vspace{2mm}
\noindent\textbf{Data Availability Statement}\hspace{3mm}Data sharing not applicable to this article as no datasets were generated or analysed during the current study.
\vspace{3mm}}

\end{document}